\theoremstyle{plain}
\newtheorem{thm}{Theorem}[section]
\newtheorem{lem}[thm]{Lemma}
\newtheorem{defin}[thm]{Definition}
\newtheorem{prop}[thm]{Proposition}
\theoremstyle{definition}
\newtheorem{rem}[thm]{Remark}
\newcommand{\future}[1]{{}}  
\newcommand{\emul}{\stackrel{{}_\ast}{\asymp}}
\title{Generic measures for translation surface flows}
\author{Howard Masur}
\email{masurhoward@gmail.com}
\address{Department of Mathematics, University of Chicago, 5734 S. University Avenue, Room 208C, Chicago, IL 60637, USA}
\begin{document}

%

\maketitle
%

%


\begin{abstract}
We consider straight line flows on a translation surface that are minimal but not uniquely ergodic. We give bounds for the number of generic invariant probability measures.
\end{abstract}
%
%
%
%
%
\section{Introduction and Statement of Theorem}

 An  Abelian  differential  or holomorphic $1$-form $\omega$ on a Riemann surface $X$, assigns to each  holomorphic coordinate $z$ on $X$ a holomorphic  function  $f^z(z)$, which in an overlapping coordinate $w$, transforms by 
 $$f^w(w)\frac{dw}{dz}= f^z(z).$$  If the Riemann surface is closed of genus $g\geq 1$ an Abelian differential $\omega$ has zeroes  of order $(\alpha_1,\ldots, \alpha_s)$  with $\sum_{i=1}^s\alpha_i=2g-2$. 
%
In a more geometric fashion one can also describe $\omega$ as a union of polygons each embedded in $\mathbb{C}$ with pairs of sides identified by translations. Since each polygon is embedded in $\mathbb{C}$, letting $z$ be the local coordinate on the polygon, one defines the holomorphic $1$-form  $\omega$  to be $dz$ in each polygon away from the vertices. The fact that sides are identified by translations says  $dz$ defines a global $1$-form.  Every Abelian  differential $\omega$ can be expressed this way.  This justifies the term {\em translation surface},  and these surfaces are  usually denoted   by $(X,\omega)$. 
The $1$-form $\omega$ defines a metric $|\omega|$ and an area form $|\omega|^2$.  In the polygon description these are just the Euclidean metric and Lebesgue measure. 

 For each direction $0\leq \theta<2\pi$ there is a straight line flow $\phi_\theta^t:(X,\omega)\to(X,\omega)$  in direction $\theta$. If $\omega$ is written locally as $dz$ then these are the lines that make angle $\theta$ with the positive real axis.    Since translations preserve slopes this gives a flow defined for all times except at the points which encounter a zero.   Lebesgue measure is invariant under the flow.      On a flat torus 
the classical Weyl theorem says that the minimality of the flow implies unique ergodicity meaning that Lebesgue measure is the only invariant probability measure for the flow. Equivalently, every orbit is equidistributed on the torus.   In genus $g\geq 2$, however, there are examples of minimal flows on translation surfaces  that are not  uniquely ergodic. The first examples appeared in the papers  of Veech  \cite{V} and Satayev \cite{S}.  In a slightly different context there are  interval exchange  transformations (IET) that are minimal, but not uniquely ergodic. An example  based on the construction of  Veech appeared in \cite {KN}. Then  Keane  \cite{K} gave a general method of constructing examples.   It is a basic fact that the set of invariant   
probability measures forms a convex set and the extreme points are ergodic.   The motivation for this paper is the following classical theorem  of Katok  (\cite{Ka}).
  \begin{thm}
Suppose $(X,\omega)$ is a translation surface on a closed surface of $g\geq 2$.    If $\phi_\theta^t$ is   a minimal straight line flow on $(X,\omega)$, then there are at most  $g$    ergodic  probability measures. 
           \end{thm}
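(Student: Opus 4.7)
My plan is to reduce the theorem to a finite-dimensional statement about the first-return map of the flow to a transversal, and then bound the dimension of its invariant length vectors using the translation-surface geometry.

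\textbf{Reduction to an interval exchange.} After rotating so that $\theta$ is vertical, pick a short horizontal segment $I\subset X$ with endpoints at (or on separatrices of) zeros of $\omega$. Minimality of $\phi_\theta^t$ ensures the first-return map $T\colon I\to I$ is defined everywhere except at finitely many preimages of $\Sigma$; the standard unfolding argument shows $T$ is an interval exchange transformation (IET) on some $d$ intervals. Integrating along the vertical flow sets up a bijection between invariant Radon measures for $\phi_\theta^t$ and for $T$ that preserves ergodicity and allows normalization to probability measures. It therefore suffices to bound the number of ergodic invariant probability measures of $T$.

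\textbf{Measures as length vectors.} To each such $\mu$ I attach the length vector $\lambda^\mu=\bigl(\mu(I_1),\dots,\mu(I_d)\bigr)\in\mathbb{R}^d_{\geq 0}$, where $I_1,\dots,I_d$ are the continuity intervals of $T$. Because $T$ acts by translation on each $I_j$, iterating Rauzy-Veech induction refines $I$ into arbitrarily fine subintervals whose $\mu$-masses are linear combinations, with non-negative integer coefficients, of the entries of $\lambda^\mu$; hence $\mu$ is determined by $\lambda^\mu$ and the assignment is an affine injection. Ergodic measures correspond to extreme rays of the closed convex cone $\Lambda$ of admissible vectors, so the number of ergodic probability measures is at most $\dim\Lambda$.

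\textbf{Dimension bound and main obstacle.} The remaining step is to show $\dim\Lambda\le g$. The naive Veech/Keane bound for an IET on $d$ intervals only gives $\lfloor d/2\rfloor$, which can exceed $g$ when $\omega$ has multiple zeros. To get the sharper bound I would pass to the Ruelle-Sullivan current: each invariant $\mu$ yields a closed $1$-current on $X$ whose cohomology class lies in $H^1(X;\mathbb{R})$, and distinct ergodic measures produce linearly independent classes. Positivity of $\mu$ (a genuine measure, not merely a signed measure) forces these classes into a positive cone; because the foliation is orientable and arises from a holomorphic $1$-form, this cone lies in a Lagrangian subspace with respect to the intersection pairing and so has real dimension at most $g$. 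This Hodge-theoretic positivity step, or equivalently the identification of the stable subspace of the Kontsevich-Zorich cocycle as $g$-dimensional, is where I expect the main difficulty to lie.
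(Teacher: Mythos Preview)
The paper does not prove this statement; it is quoted as a classical result of Katok (reference~[Ka]) and serves only as motivation for the paper's own results on generic measures. There is thus no proof in the paper to compare against.

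On its own merits, your outline is essentially Katok's original argument, but two steps are misattributed. First, ``the number of ergodic probability measures is at most $\dim\Lambda$'' does not follow from extremality alone: a cone can have many more extreme rays than its dimension. What you actually need is that distinct ergodic measures are mutually singular, hence their length vectors $\lambda^{\mu_i}$ are linearly independent. Second, and more importantly, positivity is a red herring in the final step, and no Hodge theory or Kontsevich--Zorich cocycle is required. The reason the Ruelle--Sullivan classes $[\mu_i]$ span an isotropic subspace of $H^1(X;\mathbb{R})$ is purely local: in a flow box each such current has the form $\rho(x)\,dx$ (a measure in the horizontal coordinate only), so any two of them wedge to zero pointwise and hence $[\mu_i]\cup[\mu_j]=0$ for all $i,j$. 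Isotropy for the symplectic intersection form immediately gives dimension at most $g$. The step that genuinely requires care, and which you assert without argument, is that distinct ergodic measures give \emph{linearly independent} cohomology classes, i.e., that the map $\mu\mapsto[\mu]$ from invariant transverse measures to $H^1(X;\mathbb{R})$ is injective. This is where minimality of the flow enters: a signed transverse measure representing the zero class has a primitive constant along vertical leaves, and density of leaves forces it to vanish.
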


The goal of this paper is to extend the Katok result to bound the number of generic measures. 

\begin{defin}
Let $\phi^t$ be a measure preserving flow on a probability space $(X,\mu)$.  A point $p$ is $2$-sided {\em generic} for $\mu$ if for all continuous $f:X\to \mathbb{R}$,$$\lim_{S,T\to \infty}\frac{1}{S+T}\int_{-S}^Tf(\phi^t(p))dt=\int_X f d\mu.$$ 
\end{defin}

The Birkhoff Theorem says that if an invariant  measure is ergodic then  almost every point is $2$-sided generic for the invariant measure.  A  measure may have  generic points without being ergodic. For example for the full shift on $d\geq 2$ letters every  invariant measure has generic points.

\begin{defin}
A measure $\mu$ is $2$-sided {\em generic} if it has a $2$-sided generic point.
\end{defin}


In the rest of the paper we will  just use the description generic to refer to a $2$-sided measure or $2$-sided generic point.
If a point satisfies $$\lim_{T\to \infty}\frac{1}{T}\int_0^Tf(\phi^t(p))dt=\int_X f d\mu,$$ 
we will say it is a {\em forward} generic point and the  measure is forward generic.  
This is the definition of generic given by V. Cyr and B.Kra in \cite{CK},  where bounds are given for the number of forward generic measures in quite general situations. Their results  will be   discussed in the next section.  
The purpose of this paper is to prove the following result. 
\begin{thm}
 \label{thm:main}
Let $(X,\omega)$ be a translation surface on a closed surface of genus $g\geq 2$ with $s$ zeroes.   Let $\phi_\theta^t$ be a minimal straight line flow on $(X,\omega)$ which is not uniquely ergodic. 
Then 
 the number of invariant  generic probability measures is bounded by $g+s-1$.
\end{thm}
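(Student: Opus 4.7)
The strategy is to extend Katok's symplectic/Lagrangian argument, which gives the bound of $g$ ergodic measures, by working in relative rather than absolute cohomology to accommodate the $s-1$ extra ``directions'' provided by the singularities.

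First I would reduce to interval exchange transformations. Take a horizontal segment $I$ (transverse to direction $\theta$) with both endpoints at zeroes of $\omega$; the first-return map of $\phi_\theta^t$ on $I$ is then an IET $T$ on $d = 2g+s-1$ intervals. Invariant probability measures of $\phi_\theta^t$ correspond bijectively, up to normalization, to invariant measures of $T$, and the correspondence preserves 2-sided generic points. I would then apply Rauzy--Veech induction to $T$, producing a nested sequence $I = I^{(0)} \supset I^{(1)} \supset \cdots$ of subintervals with associated Rohlin-tower decompositions of $I$ into $d$ towers $\mathcal{T}_1^{(n)},\ldots,\mathcal{T}_d^{(n)}$. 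Invariant measures are encoded by sequences of base-width vectors $c^{(n)} \in \mathbb{R}^d_{\geq 0}$ compatible under the Rauzy--Veech cocycle, and the cone of such sequences embeds naturally into $H^1(X,\Sigma;\mathbb{R})$, whose total dimension is $2g+s-1$.

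The core of the argument is to attach to each 2-sided generic measure $\mu$ a cohomology class $[\mu] \in H^1(X, \Sigma; \mathbb{R})$. The absolute part of $[\mu]$ in $H^1(X;\mathbb{R})$ is the standard Katok class built from the ergodic decomposition of $\mu$; by Katok's argument these absolute parts fill an isotropic subspace of dimension at most $g$. The relative part, living in the complementary $(s-1)$-dimensional piece measuring periods on relative cycles between distinct zeroes, is extracted from any 2-sided generic point $p$ of $\mu$: for a relative cycle $\gamma$ joining two zeroes, it records the signed difference between the forward and backward long-time averages of $p$'s orbit against a transverse measure dual to $\gamma$. The 2-sided generic hypothesis is precisely what makes this limit exist and be independent of the choice of generic point. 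If distinct generic measures produce distinct classes $[\mu]$, then since the image lies in a subspace of dimension at most $g + (s-1) = g+s-1$, the theorem follows.

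The main obstacle is the construction of the relative class and the injectivity of $\mu \mapsto [\mu]$: one must verify that the flux through each relative cycle $\gamma$ is a well-defined limit under 2-sided averaging, and that this flux distinguishes generic measures which share the same absolute Katok class but differ as convex combinations of ergodic measures. This step is what makes a \emph{forward} generic bound (\'a la Cyr--Kra) weaker than the 2-sided bound obtained here: the symmetric averaging forces the class to lie in the coisotropic subspace of dimension $g+s-1$ rather than the whole $2g+s-1$. Carrying this out rigorously requires sharp quantitative control of the Rauzy--Veech tower geometry --- estimates of the kind suggested by the paper's preamble decay constants --- so that forward and backward time-averages stabilize compatibly along the renormalization sequence and commit to a definite class in relative cohomology.
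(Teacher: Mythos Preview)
Your approach is fundamentally different from the paper's, and it has a genuine gap that I do not see how to close.

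The central problem is the inference in your third paragraph: ``If distinct generic measures produce distinct classes $[\mu]$, then since the image lies in a subspace of dimension at most $g+(s-1)$, the theorem follows.'' Injectivity of a map into a vector space of dimension $g+s-1$ does not bound the cardinality of the domain by $g+s-1$. Katok's argument for ergodic measures works not because the map $\mu_i\mapsto[\mu_i]$ is injective, but because the resulting classes are \emph{linearly independent} and span an isotropic subspace. You would need linear independence of the classes $[\nu_j]$ attached to the non-ergodic generic measures, together with independence from the ergodic classes, and you give no mechanism for that. In fact the natural absolute class of a convex combination $\nu=\sum c_i\mu_i$ is $\sum c_i[\mu_i]$, which is manifestly dependent on the ergodic classes; so the absolute part cannot separate $\nu$ from the $\mu_i$ in the sense you need. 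Your proposed relative part is also problematic: for a $2$-sided generic point the forward and backward time averages against any continuous observable coincide (both equal $\int f\,d\nu$), so the ``signed difference between forward and backward long-time averages'' is identically zero and carries no information.

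The paper proceeds along an entirely different, geometric route. It applies the Teichm\"uller flow $g_t$ to $(X,\omega)$; non-unique ergodicity forces $X(t)$ to leave every compact set in moduli space, producing a thick--thin decomposition with short curves. The key dynamical input (Proposition~\ref{prop:rectangles}) is that vertical segments through images of generic points of \emph{distinct} measures cannot $\eta$-interact (bound a rectangle of definite proportion) for large $t$; this is where $2$-sided genericity enters, via a quantitative equidistribution lemma. From this one associates to each ergodic measure a thick component or fat cylinder of definite area (Proposition~\ref{prop:ergodic}), and to each non-ergodic generic measure a disjoint subset of one of four combinatorial types inside the remaining thin part (Proposition~\ref{prop:separating}). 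The bound $g+s-1$ then comes from a topological count of how many such disjoint pieces a surface of genus $g$ with $s$ marked points can carry, formalized through a complexity function $\rho(Z)=g(Z)+s(Z)+n(Z)-1$ and an inductive cutting argument. No cohomological pairing or Rauzy--Veech cocycle appears.
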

We  do not know if the bound in Theorem~\ref{thm:main} is in general sharp.
 An  interesting question is if forward generic implies  generic in the case of flows on translation surfaces, or equivalently interval exchange transformations.    We do not in general have a  bound  for the number of  forward generic measures that improves on the bounds found in \cite{CK}. 
There  is one  special case where we can bound the number of forward  generic measures that does improve on their bounds. 
 \begin{thm}
 \label{thm:special} 
  If $(X,\omega)$ has genus $g\geq 2$ with $1$ or  $2$ zeroes,  and there are  $g$  ergodic measures,  then there are no  invariant forward  (hence $2$-sided) generic non-ergodic probability measures. 

\end{thm}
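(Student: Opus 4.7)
The strategy is a proof by contradiction. Suppose that $\mu$ is an invariant forward generic probability measure that is not ergodic, and let $p$ be a forward generic point for $\mu$. Since there are exactly $g$ ergodic probability measures $\mu_1,\ldots,\mu_g$, the ergodic decomposition writes $\mu=\sum_{i=1}^g c_i\mu_i$ with at least two of the $c_i$ strictly positive. The overall plan is to show that in this extremal situation the forward genericity of $p$ forces the backward Cesaro averages of $p$ to converge to the same measure $\mu$, so that $p$ is in fact $2$-sided generic, and then to reach a contradiction with Theorem~\ref{thm:main}.

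The first and main step is to analyze the backward averages $\nu_S:=\frac{1}{S}\int_0^S\delta_{\phi^{-t}(p)}\,dt$. By compactness of the simplex of invariant probability measures, every weak-$\ast$ accumulation point is of the form $\nu=\sum d_i\mu_i$, and I would prove that the only possible accumulation point is $\mu$ itself. The heuristic is that in the presence of $g$ ergodic measures, Rauzy--Veech induction asymptotically splits the transversal into $g$ essentially disjoint Rokhlin subtowers, each carrying a single $\mu_i$; the proportion of time the orbit of $p$ spends in each subtower along a long forward window must, by a stationarity/counting argument applied to the induced first-return map, coincide with the proportion along a long backward window. The restriction $s\leq 2$ enters crucially here: the combinatorics of Rauzy--Veech induction for surfaces with one or two zeros produces column-width relations that rigidly couple the forward and backward visit frequencies, so that an asymmetric distribution $(c_i)\neq(d_i)$ is ruled out.

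Once the backward averages of $p$ are shown to converge to $\mu$, a straightforward convex combination estimate gives $\frac{1}{S+T}\int_{-S}^T f(\phi^t p)\,dt\to\mu(f)$ as $S,T\to\infty$ independently, hence $p$ is $2$-sided generic for $\mu$. Theorem~\ref{thm:main} now bounds the total number of $2$-sided generic measures by $g+s-1$; since each ergodic measure is automatically $2$-sided generic by the Birkhoff Ergodic Theorem, these already occupy $g$ slots, and the addition of the non-ergodic $\mu$ would give $g+1$. For $s=1$ this directly contradicts the bound $g$. For $s=2$ the bound $g+1$ is not yet exceeded and one extra step is needed: I would argue that the single extra slot in the $g+s-1$ bound must correspond, through the proof of Theorem~\ref{thm:main}, to an invariant measure with a specific ``saddle-connection'' combinatorial type tied to the two zeros, and show that such a measure cannot have ergodic decomposition with $c_i>0$ on two or more components when the maximal number $g$ of ergodic measures is realized.

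The hard part will clearly be the symmetry lemma in the first step, namely ruling out backward accumulation points of $p$ other than $\mu$; this is exactly where the hypothesis $s\leq 2$ does serious work, and the proof should hinge on explicit analysis of the Rauzy--Veech matrices in the strata $\mathcal{H}(2g-2)$ and $\mathcal{H}(\alpha_1,\alpha_2)$ under the assumption of $g$ ergodic measures. The secondary obstacle is the combinatorial identification for $s=2$ of the ``extra'' generic slot as two-sided in essence, which presumably follows from a more careful reading of the proof of Theorem~\ref{thm:main} but is not immediate from its statement alone.
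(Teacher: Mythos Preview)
Your proposal takes a fundamentally different route from the paper, and it has genuine gaps.

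The paper does \emph{not} attempt to upgrade forward genericity to $2$-sided genericity and then invoke Theorem~\ref{thm:main}. Instead it argues directly with the geometry coming from Proposition~\ref{prop:ergodic}. Under the hypothesis of $g$ ergodic measures, a homology/topology count (using $s\le 2$) forces each of the $g$ subsurfaces $X_j(t_n)$ associated to the ergodic measures to be a cylinder, and forces their complement $Y(t_n)$ to be a disc when $s=1$ and an annulus when $s=2$. The disc case is immediately impossible. In the annulus case the image of the forward generic point of a putative non-ergodic measure must lie in $Y(t_n)$, producing a $(g{+}1)$st short cylinder; a separate time-rescaling argument then shows that at some later time one of the cylinders has definite circumference while its complement would still have to contain $g{+}1$ short cylinders, which is ruled out by the same count.

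Your approach, by contrast, hinges on two steps neither of which you actually prove. The ``symmetry lemma'' asserting that the backward averages of $p$ must converge to the same $\mu$ is exactly the open question the paper flags in the introduction (whether forward generic implies $2$-sided generic for translation flows). Your heuristic about Rauzy--Veech subtowers ``rigidly coupling forward and backward visit frequencies'' is not an argument: nothing in the hypothesis of $g$ ergodic measures visibly constrains the backward orbit of an individual point, and you give no mechanism by which $s\le 2$ would make this work. Second, even if that step were granted, invoking Theorem~\ref{thm:main} does not finish the $s=2$ case, since $g+s-1=g+1$ is not exceeded by $g$ ergodic measures plus one non-ergodic one. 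Your proposed patch---identifying the ``extra slot'' with a specific combinatorial type and ruling it out---is not something that can be read off either the statement or the proof of Theorem~\ref{thm:main}. In short, both of the load-bearing steps in your outline are deferred to lemmas for which no credible strategy is offered.
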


An example  is the case of genus $2$ with two zeroes.   If the flow is minimal, but not uniquely ergodic, there are two ergodic measures and no other forward generic measures.  

 \noindent  \textbf{Acknowledgements}
The author  was supported in part by NSF grant DMS-1607512.
The author wishes to thank the referee,    Jonathan  Chaika, Bryna Kra, Alex Eskin, Kasra Rafi, and Alex Wright for helpful  conversations.

\subsection{Connections to Interval Exchange Transformations and History}
\medskip

Recall to define an interval exchange one is  given $x=(x_1,x_2,\ldots,x_d)\in \mathbb{R}^d$ where $x_i > 0$. Form the  $d$ sub-intervals of the
interval $[0,\sum_i x_i)$: $$I_1=[0,x_1) ,
I_2=[x_1,x_1+x_2),\ldots,I_d=[x_1+\ldots x_{d-1}, x_1+\ldots+x_{d-1}+x_d).$$ Given
 a permutation $\pi$ on  the set $\{1,2,\ldots,d\}$, one obtains a d-\emph{Interval Exchange Transformation} (IET)  $ T_{\pi,x\color{black}} \colon [0,\underset{i=1}{\overset{d}{\sum}} x_i) \to
 [0,\underset{i=1}{\overset{d}{\sum}} x_i)$ which exchanges the intervals $I_i$ according to $\pi$. That is, if $z \in I_j$ then $$T_{\pi, x\color{black}}(z\color{black})= z - \underset{k<j}{\sum} x_k +\underset{\pi(k')<\pi(j)}{\sum} x_{k'}.$$
Lebesgue measure is invariant under the action of $T$.

There  is a relationship between straight line flows on translation surfaces $(X,\omega)$ and IET. The first return map to a transversal of the flow is an IET. Conversely, an IET can be suspended  to form a closed translation surface.
  The genus $g$ and number of zeroes $s$  depend not only on the number of intervals  but also on 
 the permutation  $\pi$ of the IET.  For complete details we refer to \cite{Z}.
  For example, if $\pi$ is the hyperelliptic permutation $$\pi (j)=d-j+1$$ for $j=1,\ldots, d-1$ and $d$ is even, 
then the suspended $(X,\omega)$ has genus $g=\frac{d}{2}$ and  a single zero. If $d$ is odd, then the suspended $(X,\omega)$ has genus $g=\frac{d-1}{2}$ and two zeroes. Theorem~\ref{thm:main} gives a bound of $\frac{d}{2}$ and $\frac{d+1}{2}$ in the two cases.  
 In general, we have 
  $$d=2g+s-1.$$   

For further results on counting ergodic measures under certain conditions see \cite {DF}.

\subsection{History}

As mentioned before 
the earliest examples of minimal but not uniquely ergodic flows on translation surfaces and equivalently interval exchange transformations were given by Veech in  \cite{V}, Sataev in  \cite{S}, and Keane in \cite{K}.

More recent examples of minimal but not uniquely ergodic flows using topological methods  have been given by Gabai in \cite{G} and Lehnzen, Leininger, Rafi  in \cite{LLR}. 
In \cite{CM} Chaika-Masur constructed a minimal  interval exchange transformation on $6$ intervals  with exactly $2$ ergodic measures and one additional $2$-sided generic measure which is not ergodic. The question arose whether there were other  generic  measures, in addition to the one found.  The IET can be suspended to give a genus $3$ translation surface with one zero.  Theorem~\ref{thm:main}  says that there is at most one $2$-sided generic non-ergodic measure  so in fact there are no other generic measures in this
$6$ interval IET example, besides the one in \cite{CM}.  

A desire for a bound on the number of generic measures was also inspired by work \cite{CK} of V. Cyr and B. Kra. They studied subshifts with linear word growth.
They showed that if  a subshift on $d$ letters  satisfies  $$\limsup_{n\to\infty}  \frac{P(n)}{n}<k,$$ then the number of distinct non atomic forward generic measures  is bounded by $k-2$. Here $P(n)$ is the number of words of  length  $n$.  

As an application of their general result 
they showed that for a minimal interval exchange on $d$ letters, the above $\limsup$ is strictly smaller than $d$, so they have  a bound of $d-2$ for the number of forward generic measures.   The bound in \cite{CK} is therefore $d-2=2g+s-3$ forward generic measures for IET on $d$ letters.   Theorem~\ref{thm:main} gives the  smaller bound $g+s-1$ for the number of ($2$-sided) generic measures. But again as we do not know if forward generic implies $2$-sided generic we do not know if this improves their bound in general, but it is an improvement 
 in the special case  covered by 
Theorem~\ref{thm:special}.
 \subsection{Outline}
We briefly outline the idea of the proof of Theorem~\ref{thm:main}. We use Teichm\"uller dynamics and the idea of renormalization.  By rotating, we can assume that the straight line flow is in the vertical direction.  We assume throughout the paper that it is minimal, but not uniquely ergodic. We then apply the Teichm\"uller geodesic flow $g_t$ to the surface $(X,\omega)$.   Along the new surfaces $g_t(X,\omega)$ the  vertical lines of the translation surface flow are contracted, and the horizontal lines expanded. It is known, \cite{M},  that  as $t\to\infty$, the Riemann surfaces along  $g_t(X,\omega)$ eventually leave every compact set  
of the moduli space of Riemann surfaces.  This means that for large $t$    there is a maximal collection of disjoint curves that are hyperbolically short or equivalently short in extremal length. The curves depend on $t$.     The complimentary components are either cylinders or thick surfaces which means that they do not have any short essential curves.  This defines  a thick-thin decomposition of the surface.  We show first in  Proposition ~\ref{prop:ergodic}  that  the image under $g_t$ of generic points on $(X,\omega)$ of different  ergodic measures,  lie in different connected components  of the  complement of the short curves and have area bounded below away from $0$.   These can be either thick surfaces or  cylinders.  Thus we can associate to each ergodic measure and large time, a subsurface depending on the time,  and further that different ergodic measures are associated to {\em disjoint} subsurfaces.    These ideas are related to work of McMullen in \cite{Mc}.  He found a bound for the number of ergodic measures under a certain no loss of mass assumption in terms of the number of connected components. 

 If there are  generic measures that are not ergodic, they will be associated to subsets of  other  complements of the short curves. Because the set of generic points of these measures has measure $0$ it turns out  these subsets may not be an  entire complimentary component  which leads  to complications in the counting of the number of possible measures.     To guarantee that distinct generic measures determine {\em disjoint} subsets we need them to be  $2$-sided except in the cases of Theorem~\ref{thm:special}.  This analysis is carried out in     Proposition~\ref{prop:separating}.   Counting the maximum  number of disjoint subsets of the original surface,   will give the theorem.

 The arguments necessary  for Proposition ~\ref{prop:ergodic}  and Proposition~\ref{prop:separating}  begin with  Proposition~\ref{prop:rectangles}.  It says  that vertical lines through the images of generic points of different  measures cannot bound rectangles on  $g_t(X,\omega)$ for large $t$.  
The rectangle argument is used in Proposition~\ref{prop:ergodic}  
to show that  subsurfaces  that have limits of positive area in the Deligne-Mumford compactification or are cylinders with circumferences going to $0$ and area bounded away from $0$ provide  the desired subsurfaces for the ergodic measures.

   To use limiting ideas in considering non ergodic measures requires  arguments where one renormalizes these complementary  surfaces, to account for the fact that areas approach $0$.   This is carried out in  Proposition~\ref{prop:separating}, based on work in \cite{EKZ}, which in turn used work in \cite{R1} and \cite{R2}. We will also need a preliminary result, Lemma~\ref{lem:effectivegen} on quantitative genericity. 

\subsection{Notation}
Given quantities $x,y$ we use the notation $x\emul y$ to indicate there is a constant $C$ depending only on the genus $g$ so that $$\frac{1}{C}x\leq y\leq Cx.$$

\section{Translation surfaces, Genericity}

\subsection{Saddle connection, cylinders}

 A translation surface $(X,\omega)$   defines a metric  $|\omega(z)dz|$ on $X$ which is flat except at the singularities, which have concentrated negative curvature.  In the polygon version one takes the  Euclidean metric $|dz|$ in each polygon. Translations preserve the metric.  Moreover slopes of lines are preserved  under the side identifications.  In particular this means that each slope defines a flow by lines of that slope.   A line leaving a zero is called a {\em separatrice}. A line segment joining singularities   without singularities in its interior is called a {\em saddle connection}. 
The holomorphic $1$-form $\omega$ defines the area form $|\omega|^2$. We will assume that our translation surfaces have unit area $$\int_X |\omega|^2=1.$$
If the metric $|\omega|$  is understood we will denote the length of a curve by $|\alpha|$
 
  Given an oriented line segment  $\gamma$ one defines the holonomy of $\gamma$ by $\text{hol}(\gamma) := \int_\gamma \omega\in \mathbb{C}$.
The real and imaginary parts of the holonomy are also called the horizontal and vertical components of the holonomy.

 If a geodesic $\beta$ joins a nonsingular point to itself without passing through a singularity it is the core curve of a {\em cylinder} $C(\beta)$,  i.e. the isometric image of a Euclidean cylinder $[0, a] \times (0, b) / (0, y) \sim (a, y)$ for some positive real numbers $a$ and $b$. Then $b$ is the height of the cylinder. We also refer to it as the distance across the cylinder. The cylinder is swept out by closed parallel  loops homotopic to $\beta$. We will suppose throughout that all cylinders are maximal, i.e. in the notation of the previous sentence that $b$ is as large as possible.   The boundary of the cylinder is then composed of saddle connections.  There are also saddle connections crossing the cylinder joining zeroes on opposite boundary components.

 The angle around a zero of order $\alpha_i$ is $2\pi (\alpha_i+1)$ and is called a cone angle. For every  free homotopy class $\gamma$ of a simple closed curve there is a geodesic in its homotopy class. It is either  unique or there is a cylinder. If it  is unique, then it is a union of saddle connections. The angle at any zero between incoming and outgoing saddle connections is at least $\pi$.

  It is a classical result of Strebel's \cite{Str} that on  a closed translation surface (or more generally on any half translation surface or quadratic differential), for  any direction, the flow in that direction decomposes into cylinders and minimal domains in which  every line that does not hit a zero is dense in that domain.  If the minimal domain is not the entire surface then the boundary consists of saddle connections in that direction.
  
  \subsection{Trapezoids}
  We also need to discuss translation surfaces with boundary. A complete treatment can be found in \cite{Str}.   We consider  the horizontal direction. We assume that each boundary component    consists of a union of saddle connections, but assume none are horizontal. 
In the horizontal direction there may still be minimal domains and cylinders. 
There are in addition,   horizontal segments leaving points on each boundary component.      A horizontal segment leaving the boundary at a nonzero point either hits a   zero in the interior  or returns to a (possibly the same) boundary component.  

If a  segment joins a pair of nonzero points on the two  boundaries, there is a maximal family of parallel horizontal segments joining nonzero points on the same boundary components, determining  a  trapezoid.  The boundary of the trapezoid  has two horizontal sides   each of which contains a zero, either on the boundary of the surface  or in the interior. If the only zero  on  a horizontal side is on the boundary, then at the zero there are at least two horizontal separatrices  leaving the zero and entering the surface.   For if there was only one, the trapezoid could be extended, contrary to the assumption it is maximal.  The other two sides of the trapezoid are subsegments of the boundary components.   On each horizontal side of the trapezoid there is at least one  segment that the trapezoid shares with a different domain, which is another trapezoid, a cylinder or a minimal domain.

\subsection{Transverse Measures and Effective Genericity}

An invariant measure $\mu$ for the vertical flow $\phi^t$ on $(X,\omega)$ defines a {\em transverse} measure $\rho$ on  horizontal segments  $I$ by assigning to each segment  $I$ and sufficiently  small $t_0$, $$\rho(I)=\frac{\mu(I\times [0,t_0])}{t_0},$$ where $I\times [0,t_0]$ is the set of points $\phi^t(x)$ where $x\in I$ and $0\leq t\leq t_0$.   The fact that  $\mu$ is measure preserving under $\phi^t$ implies $\rho(I)$  is independent of $t_0$ for small $t_0$. Furthermore it is easy to see that the transverse measure is flow invariant.   
It also follows that if $\mu$ and $\mu'$ are distinct flow invariant probability measures, then the associated transverse measures $\rho$ and $\rho'$ are distinct as well.

\begin{lem}
\label{lem:effectivegen}
Given  a vertical flow on a translation surface with invariant measure $\mu$ and transverse measure $\rho$,  a pair of numbers $0<\eta<1$  and $0<\epsilon<1$,  a horizontal interval $I$, and a ($2$-sided) generic point $q$ for $\mu$, there exists   $L_0$, such that for the  vertical  line $\gamma$ of length $L>L_0$ in either direction, with one endpoint $q$, any subsegment $\gamma'$ of $\gamma$ of length at least $\eta |\gamma|$ 
satisfies 
$$|card(\gamma'\cap I)-\rho(I)|\gamma'||\leq \frac{2\epsilon|\gamma'|}{\eta}.$$


\end{lem}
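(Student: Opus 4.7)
The plan is to upgrade the 2-sided genericity of $q$ (which is only stated for continuous test functions) to a quantitative counting statement for
$$n(s) := \operatorname{card}(\gamma_{[0,s]}\cap I),$$
where $\gamma_{[0,s]} = \{\phi^t q : 0\le t\le s\}$, and then obtain the subsegment bound by subtraction, treating separately the case in which $\gamma'$ begins near $q$.

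First I would pick $t_0>0$ strictly smaller than the minimum first return time of the vertical flow to $I$; this minimum is positive because the return map to $I$ is an interval exchange, whose return time takes only finitely many values. With this choice the strip $R:=I\times[0,t_0]$ is an embedded flow box, so $\int_X \chi_R\, d\mu = t_0\,\rho(I)$, and for every $s\ge 0$ one has the flow-box identity
$$\left|\int_0^s \chi_R(\phi^t q)\,dt - t_0\,n(s)\right| \le 2 t_0,$$
because each hit $s_i\in[0,s]$ contributes the full sub-interval $[s_i,s_i+t_0]$ of length $t_0$ to the integral, with at most two boundary partial contributions, each of size at most $t_0$.

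Second, I would observe that 2-sided genericity implies forward (and backward) genericity at $q$ for continuous functions, via the standard subsequence argument that compares the 2-sided limit along sequences $S_n,T_n\to\infty$ with $T_n/S_n\to\alpha$ for two different values of $\alpha$. For each $\delta>0$ I would sandwich $\chi_R$ between continuous functions $f^-\le\chi_R\le f^+$ with $\int_X(f^+-f^-)\,d\mu<\delta$. Applying forward genericity to $f^\pm$ and using the sandwich gives $\bigl|\tfrac{1}{s}\int_0^s \chi_R(\phi^t q)\,dt - t_0\rho(I)\bigr| \le 2\delta$ for all $s \ge L_1(\delta)$; combined with the flow-box identity this yields
$$|n(s) - \rho(I)\,s| \le \tfrac{2\delta}{t_0}\,s + 2.$$
Choosing $\delta$ so that $2\delta/t_0 \le \epsilon/2$ gives $|n(s) - \rho(I) s|\le (\epsilon/2)s + 2$ for all $s\ge L_1$, and the backward analogue holds for backward vertical segments from $q$.

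Finally, to bound $\operatorname{card}(\gamma'\cap I)$, I would parametrize $\gamma' = \{\phi^t q : a\le t\le b\}$ inside the forward segment $\gamma=\gamma_{[0,L]}$ (the backward case is identical), so that $\operatorname{card}(\gamma'\cap I) = n(b)-n(a)$ and $|\gamma'| = b-a\ge \eta L$. If $a\ge L_1$, applying the forward counting estimate at both endpoints gives
$$|\operatorname{card}(\gamma'\cap I) - \rho(I)(b-a)| \le \tfrac{\epsilon}{2}(a+b) + 4 \le \epsilon L + 4.$$
If instead $a<L_1$, both $n(a)$ and $\rho(I)a$ are bounded by a constant $C_1$ depending only on $L_1$, $I$ and $\mu$, so the triangle inequality together with the estimate at $b$ gives $|\operatorname{card}(\gamma'\cap I) - \rho(I)(b-a)| \le (\epsilon/2)b + C_1 + 2$. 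In either case, choosing $L_0$ large enough to absorb the additive constants into $(\epsilon/2)L$ and using $\epsilon L\le \epsilon|\gamma'|/\eta$ (from $|\gamma'|\ge\eta L$) delivers the claimed bound, with room to spare for the factor of $2$. The only real obstacle is the bookkeeping of constants to land exactly at $2\epsilon|\gamma'|/\eta$; the core content is the sandwich principle combined with the flow-box identity.
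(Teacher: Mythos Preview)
Your proposal is correct and follows essentially the same route as the paper: once the one-sided counting estimate $|n(s)-\rho(I)s|\le\epsilon s$ is available for segments emanating from $q$, both you and the paper obtain the subsegment bound by writing $\operatorname{card}(\gamma'\cap I)=n(b)-n(a)$ and splitting into the cases $a\ge L_1$ and $a<L_1$. The only difference is that the paper simply asserts the existence of $L_1$ with the counting property, whereas you supply the standard justification (flow-box identity, sandwiching $\chi_R$ by continuous functions, and the reduction from 2-sided to one-sided genericity via $S_n/T_n\to 0$); these are exactly the details a careful reader would fill in, so there is no substantive divergence.
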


\begin{proof}
Choose  $L_1$ large enough 
so that  if $\tau$ is a vertical segment in either direction with one endpoint $q$ and $|\tau|\geq L_1$, then  $$|\text{card}(\tau\cap I)- |\tau|\rho(I)|<\epsilon|\tau|.$$ Let $M$ be the maximum of the pair of numbers of intersection the  vertical segments of length  $L_1$ in the two directions with one endpoint $q$ have with $I$.  Let \begin{equation}
\label{eq:L}
L_0=\max(\frac{L_1}{\eta}, \frac{\rho(I)L_1+M}{\epsilon}).
\end{equation}
 Now take a segment $\gamma$ of length $L\geq L_0$ and a subsegment $\gamma'\subset\gamma$ of length at least $\eta L\geq L_1$ which starts at some $y$ and ends at some $z$ further away from $q$ than $y$.   Let $\tau$ the subsegment of $\gamma$ starting at $q$ and ending at $z$.  We have $|\tau|\geq L_1$ so
 \begin{equation}
\label{eq:tau}
\left|\text{card}(\tau\cap I)-\rho(I)|\tau|\right|<\epsilon|\tau|.
\end{equation}


Let $\sigma$ be the segment from $q$ to $y$. Then
\begin{equation}
\label{eq:count}
\text{card} (\gamma'\cap I) =\text{card}(\tau\cap I)-\text{card} (\sigma\cap I)
\end{equation}


The first case is 
if $|\sigma|\geq L_1$.
Then (\ref{eq:tau}) and   (\ref{eq:count}) give $$|\text{card} (\gamma'\cap I)-\rho(I)|\gamma'||\leq \epsilon(|\tau|+|\sigma|)\leq 2\epsilon |\tau|\leq 2\epsilon L=\frac{2\epsilon}{\eta} \eta L\leq \frac{2\epsilon|\gamma'|}{\eta}.$$
We are done in this case.

Now suppose $|\sigma|\leq L_1$ and so $|\gamma'|\leq |\tau|\leq |\gamma'|+L_1$.  Now the second term on the right  in (\ref{eq:count}) is bounded by $M$. 
Then $$ |\gamma'|\rho(I)-\epsilon (|\gamma'|+L_1)-M\leq \text{card}(\tau\cap I)-M\leq \text{card}(\gamma'\cap I)\leq \text{card}(\tau\cap I)\leq (|\gamma'|+L_1|)\rho(I)+\epsilon (|\gamma'|+L_1).$$ Together with the choice of $L_0$ in  (\ref{eq:L}) this gives   
 $$|\text{card } (\gamma'\cap I)-|\gamma'|\rho(I))|\leq \rho(I)L_1+M+\epsilon(L_1+|\gamma'|)\leq 2\epsilon L\leq\frac{2\epsilon}{\eta} |\gamma'|.$$

\end{proof}

  \section{ Teichm\"uller geodesic flow,  thick-thin decomposition}

The translation surfaces of genus $g$ with fixed
 orders of zeroes $\alpha_1,\ldots, \alpha_s$ with $\sum_{i=1}^s\alpha_i=2g-2$ 
fit together to form a moduli space or stratum $\mathcal{H}(\alpha_1,\ldots,\alpha_s)$.
There is a natural map $$\pi:\mathcal{H}(\alpha_1,\ldots, \alpha_s)\to \mathcal{M}_g,$$ the moduli space of Riemann surfaces which simply records the Riemann surface of $(X,\omega)$. 

There is defined on $\mathcal{H}(\alpha_1,\ldots, \alpha_s)$ an action of the group $SL(2,{\bf R})$.  The action is the linear action on polygons.  The diagonal subgroup $$g_t=\begin{pmatrix} e^{t/2}& 0\\
0& e^{-t/2}
\end{pmatrix}$$ is the  Teichm\"uller  geodesic flow.   It contracts the vertical lines of $\omega$ by $e^{t/2}$ and expands along the horizontal lines by $e^{t/2}$.  We will think of $g_t$ in two ways.  If we represent  translation surfaces as  polygons, then it is a  linear map, called the {\em Teichm\"uller map}  from one  polygon to another.   We can also think of it as a self map from the stratum to itself which takes the point corresponding to one polygon to the point corresponding to its image under the Teichm\"uller map.

We will denote the underlying Riemann surface of $g_t(X,\omega)$ by $X(t)$ and the holomorphic $1$-form by $\omega(t)$.

An excellent short survey of translation surfaces and the $SL(2,{\bf R})$  action on their moduli spaces can be found in \cite{W}.

For the homotopy class $\alpha$ of a  closed curve  on a closed  Riemann surface $X$ the extremal length of $\alpha$ on $X$ is defined to  be  $$\text{Ext}_X(\alpha)=\sup_\sigma \frac{\ell_\sigma^2(\alpha)}{\text{area}(\sigma)}.$$
Here, $\sigma$  ranges over all metrics in the conformal class of $X$  and 
$\ell_\sigma(\alpha)$  is the infimum of the $\sigma$-length of all representatives of the homotopy class of the curve $\alpha$.  

For constants $\epsilon_0>\epsilon_1 > 0$, the $(\epsilon_0,\epsilon_1)$ thick-thin decomposition of $(X,\omega)$ is the pair $(\mathcal{A},\mathcal{Y})$ where $\mathcal{A}$ is the set of geodesic representatives in the metric defined by $(X,\omega)$ of closed curves $\alpha$  such that $\text{Ext}_X(\alpha)\leq\epsilon_0$ and $\mathcal{Y}$ is the set of the components of $X$  cut along $\mathcal{A}$. We assume that $\mathcal{A}$ is maximal in the sense that any curve not in  $\mathcal{A}$ has extremal length at least $\epsilon_1$.  

\begin{rem}
It may happen that distinct short curves in $\mathcal{A}$ share a zero. This would be   the case, for example, if $\omega$ has a single zero and there is more than one short curve. If this happens then the translation surface representative of a thick subsurface has strictly fewer boundary components than the number of short curves on its boundary. 
\end{rem}

We have the following definition of size due to Rafi \cite{R1}.
\begin{defin}
Let $Y$ be a subsurface  of a translation surface $(X,\omega)$ whose boundary consists of a collection of curves with extremal length at most $\epsilon_0$.   If $Y$  is not topologically a $3$-times punctured 
sphere or a cylinder,  the {\em size} $\lambda(Y)$ is defined to be the infimum of the $|\omega|$ lengths of the  essential closed geodesics in $Y$.
A $3$-times punctured sphere  does not have essential curves so the size is defined to be the diameter. If $Y$ is a cylinder $C(\alpha)$,  the   {\em size}  $\lambda(C(\alpha))$ is the length of a vertical segment joining the two boundary components. 

\end{defin}
For the definition in the case of cylinders to make sense it is necessary that  the core curve of the cylinder not to be vertical as it will not be for the rest of the paper since the vertical flow is assumed to be minimal

Let  $(\mathcal{A},\mathcal{Y})$ be the thick-thin decomposition of $(X,\omega)$.    If  $\alpha\in \mathcal{A}$ has a unique geodesic representative as a union of  saddle connections it is  on the boundary of a pair of surfaces  $Y$ and $Z$, which may coincide.  If  there is  a cylinder $C(\alpha)$ then each boundary component of $C(\alpha)$ is on the boundary of such a $Y$.  We will need the following result of  Rafi's which is (part of) Theorem 3.1 of  \cite{R2}. 
\begin{thm}
\label{thm:Rafi}

\begin{equation}
\label{eq:modulus}
\frac{1}{\text{Ext}_X(\alpha)}\emul \log \frac{\lambda(Y)}{|\alpha|}+\log \frac{\lambda(Z)}{|\alpha|}+\frac{\lambda(C(\alpha))}{|\alpha|}
\end{equation}
\end{thm}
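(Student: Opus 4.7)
The plan is to exploit the duality $\mathrm{Ext}_X(\alpha)^{-1} \emul \sup_A \mathrm{Mod}(A)$, where the supremum runs over embedded annuli $A \subset X$ with core homotopic to $\alpha$. The goal then reduces to showing
$$\sup_A \mathrm{Mod}(A) \emul \log \frac{\lambda(Y)}{|\alpha|}+\log \frac{\lambda(Z)}{|\alpha|}+\frac{\lambda(C(\alpha))}{|\alpha|}.$$

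For the $\gmul$ direction, I would exhibit an explicit annulus by concatenating three nested pieces: the flat cylinder $C(\alpha)$ in the middle, which contributes modulus exactly $\lambda(C(\alpha))/|\alpha|$, together with two expanding collars $E_Y \subset Y$ and $E_Z \subset Z$. Each collar has the corresponding component of $\partial C(\alpha)$ as inner boundary and a concentric curve of $|\omega|$-length $\emul \lambda(Y)$ (respectively $\lambda(Z)$) as outer boundary, where the concentric curves are tracked as flat level sets of the distance to $\alpha$. Testing the definition of extremal length against the trial conformal factor $\rho = |\omega|/r$, with $r$ the $|\omega|$-distance to $\alpha$, yields the standard expanding-annulus estimate $\mathrm{Mod}(E_Y) \gmul \log(\lambda(Y)/|\alpha|)$. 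Because moduli of disjoint nested annuli add, this delivers the lower bound.

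For the $\lmul$ direction, take any embedded annulus $A$ around $\alpha$ and decompose it according to its intersections with $C(\alpha)$, $Y$, and $Z$. The portion inside $C(\alpha)$ is itself a subcylinder, hence has modulus $\leq \lambda(C(\alpha))/|\alpha|$. For the piece inside $Y$, embeddedness of $A$ together with the definition of $\lambda(Y)$ forces the outermost concentric curve to have $|\omega|$-length $\lmul \lambda(Y)$: otherwise it would meet an essential closed geodesic of $Y$ and could not bound a disk containing $\alpha$ inside $Y$. A reverse trial-metric argument, or equivalently Minsky's flat annulus estimate, then bounds $\mathrm{Mod}(A \cap Y) \lmul \log(\lambda(Y)/|\alpha|)$, and symmetrically for $Z$. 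Summing the three contributions gives the $\lmul$ bound.

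The main obstacle is the $\lmul$ direction in the presence of degeneracies already flagged in the Remark: when several short curves of $\mathcal{A}$ share a zero, or when $Y$ itself contains further short curves in its interior. In such situations the expanding collar in $Y$ can appear to accumulate more conformal modulus than $\log(\lambda(Y)/|\alpha|)$, because the $|\omega|$-geometry can ``hide'' modulus inside secondary thin parts. Ruling this out requires the finer comparison between the flat $|\omega|$-metric and the hyperbolic metric on $Y$, via Minsky's product-regions theorem together with Rafi's inductive combinatorial estimates; this is the technical heart of the argument in \cite{R2}.
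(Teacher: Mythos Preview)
The paper does not prove this statement at all: it is quoted verbatim as ``(part of) Theorem 3.1 of \cite{R2}'' and used as a black box. So there is no proof in the paper to compare against.

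Your outline is a faithful sketch of the strategy actually used in \cite{R1,R2}: the duality $\mathrm{Ext}_X(\alpha)^{-1}\emul\sup_A\mathrm{Mod}(A)$, the decomposition of the maximal annulus into the flat cylinder plus two expanding collars, and the trial metric $|\omega|/r$ for the expanding pieces are exactly the ingredients. You are also right that the $\lmul$ direction is where the work lies, and that the obstruction is controlling modulus hidden in secondary thin parts of $Y$ or $Z$. Your last paragraph essentially concedes that you are not proving this direction but deferring to \cite{R2}, which is what the paper itself does. So as a ``proof'' your proposal is incomplete in the same place the paper is, but as a summary of why the statement is true it is accurate.
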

By way of further explanation, the first two terms are  the {\em moduli} of what is called the {\em expanding annulus} in $Y$ (resp. $Z$) that are  isotopic to $\alpha$. The expanding annulus is foliated by closed curves that are equidistant from the loop $\alpha$. 
The last term on the right is the modulus $\text{Mod}(C(\alpha))$ of the cylinder.  This is the ratio of the height to the circumference of the cylinder.

We also will use Theorem 4 of \cite{R1} which says  
\begin{equation}
\label{eq:boundedbelow}
\text{diam}(Y)\emul   \lambda(Y). 
\end{equation}.

Now suppose  $t_n\to \infty$ is a sequence with the property that for some maximal collection of curves  $\alpha(t_n)$ we have $\text{Ext}(\alpha(t_n))\to 0$ on the Riemann surface $X(t_n)$ of $g_{t_n}(X,\omega)$.  This is equivalent to saying that the hyperbolic lengths of $\alpha(t_n)$ go to zero as well.  In other words we are leaving every compact set in the moduli space $\mathcal{M}_g$.  
Consider any sequence of   components $Y(t_n)$ of the complement of the flat geodesic representatives of $\alpha(t_n)$ along $X(t_n)$.
Assume they are not cylinders.   We write $|\alpha(t_n)|$ to be the flat length of $\alpha(t_n)$ with respect to metric defined by $\omega(t_n)$.

\begin{defin}
We say  such a sequence of noncylindrical complementary components $Y(t_n)$
is a {\em plump} sequence if for all boundary components $\alpha(t_n)$ of $Y(t_n)$,  $$\lim_{t_n\to\infty}\frac{|\alpha(t_n)|}{\lambda(Y(t_n))}=0.$$ 
We say it is   {\em gaunt} if there is some boundary component $\alpha(t_n)$ of $Y(t_n)$ with $$\liminf_{t_n\to\infty}  \frac{|\alpha(t_n)|}{\lambda(Y(t_n))}>0.$$

\end{defin}
We note that if a sequence is gaunt we can pass to a subsequence so that the above $\liminf$ is replaced by limit. 
 It may happen that   $Y(t_n)$  has  empty  interior,  (see \cite{R1} for an example). 
We also may have that $Y(t_n)$ is a disc.
This can happen if the short extremal  length curves bound a sphere, but their representatives in the flat metric form a connected set.

\section{ Limits of translation surfaces}
A  compactification of a stratum of Abelian differentials and its relation to the Deligne-Mumford compactification of the Riemann moduli space is carried out in the paper \cite{BCGGM}. We will not need their entire spectrum of results, but essentially we will use Theorem 10 of \cite{EKZ} which is a theorem about limits of translation surfaces.   We begin with a brief explanation of some ideas from the Deligne Mumford compactification $\overline{\mathcal{M}_g}$ of the moduli space $\mathcal{M}_g$ of  closed Riemann surfaces of genus $g$.  References are  the  papers \cite{HK}, \cite{A}, \cite{Mc}.The points in  
 $\overline{\mathcal{M}_g}\setminus \mathcal{M}_g$ are  possibly disconnected Riemann surfaces $X(\infty)$ with punctures, or in the language of algebraic geometry noded stable algebraic curves. We denote the nodes by $\Sigma(\infty)$. These are points where two distinct surfaces are glued together or two points of the same surface are identified.  By separating the pair of points, we will think of them as {\em punctures}. 

We put a topology on $\overline{\mathcal{M}_g}$ by saying $X_n\to X(\infty)$ if there is an exhaustion of $X(\infty)\setminus \Sigma(\infty)$  by compact sets $K_n$ and for any $\epsilon>0$, for large $n$, injective $(1+\epsilon)$ quasiconformal maps  $h_n:K_n\to X_n$.

This allows us to define if a sequence $p_n\in X_n$ has a limit  $q_\infty\in X(\infty)\setminus\Sigma(\infty)$. Namely,  there is a sequence $q_n\in K_n$ such that   $p_n=h_n(q_n)$ and $q_n\to q_\infty$.

We can extend all this to subsurfaces $X^b(\infty)\subset X(\infty)$ with boundary and say a sequence $X_n^b\subset X_n$ with boundary converges to $X^b(\infty)$ if there is a nested sequence of neighborhoods $U_n$ of $\partial X^b(\infty)$ converging to  the boundary and for any $\epsilon$ there are  $1+\epsilon$ quasiconformal maps of the complement of $U_n$ to $X_n^b$.

We will apply this to 
sequences   $X_j(t_n)$ that are  thick components of thick-thin decomposition of  $\omega(t_n)$
whose boundary satisfies $Ext(\alpha(t_n))\to 0$. 
 
Now let $\omega(t_n)$ a  holomorphic $1$-form on $X(t_n)$.
  Let $\omega_j(t_n)$ the   restriction of $\omega(t_n)$ to $X_j(t_n)$.  

\begin{prop}
\label{prop:limit}
Suppose $X_j(t_n)$ is sequence of thick components of the thick-thin decomposition.  By passing to  a subsequence we can assume that there is a limiting surface $X_j^b(\infty)$, possibly with boundary, together with a $1$-form $\omega_j(\infty)$, such that .$\frac{\omega_j(t_n)}{\lambda (X_j(t_n))}$ converges to $\omega_j(\infty)$.  Moreover for boundary loops 
 $\alpha(t_n)$ of $X_j(t_n)$  
 \begin{itemize}
 \item if  $\lim_{t_n\to\infty} \frac{|\alpha(t_n)|}{\lambda (X_j(t_n))}>0$ then $X_j(\infty)$ has a boundary 
 loop  corresponding to $\alpha(t_n)$.  
\item  if  $\frac{|\alpha(t_n)|}{\lambda (X_j(t_n))}\to 0$ then   $X_j(\infty)$ has a puncture corresponding to $\alpha(t_n)$ 
\end{itemize}
\end{prop}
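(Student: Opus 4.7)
The plan is to \emph{renormalize} the $1$-form on each thick component so that the rescaled surfaces live in a uniformly bounded geometric family, and then to extract a subsequential limit in the Deligne-Mumford sense using the compactness provided by Theorem 10 of \cite{EKZ}. The two bullet points will fall out of a pointwise comparison of boundary lengths with the renormalization factor $\lambda(X_j(t_n))$.

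First I would set $\widetilde{\omega}_j(t_n)=\omega_j(t_n)/\lambda(X_j(t_n))$. By construction the size of $X_j(t_n)$ in the metric $|\widetilde{\omega}_j(t_n)|$ is exactly $1$, and by Rafi's comparison (\ref{eq:boundedbelow}) the diameter is $\emul 1$ as well, so together with a lower bound on the injectivity radius from the definition of size the total rescaled area is uniformly bounded above. The topological type of $X_j(t_n)$ is one of finitely many, so I would pass to a subsequence on which the topology is constant. The rescaled boundary lengths $|\alpha(t_n)|/\lambda(X_j(t_n))$ are controlled from above by the rescaled diameter, and by a further diagonal subsequence I may assume each such ratio converges to some $\ell_\alpha\in [0,\infty)$.

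Next I would invoke the compactness statement for limits of translation surfaces, which in this setting amounts to Theorem 10 of \cite{EKZ} combined with the Deligne-Mumford topology recalled in the preceding subsection. Because the rescaled surfaces $(X_j(t_n),\widetilde{\omega}_j(t_n))$ have bounded area, bounded diameter, size bounded below, and controlled boundary lengths, they converge along a further subsequence to a (possibly disconnected) translation surface with boundary $(X_j^b(\infty),\omega_j(\infty))$ via $(1+\epsilon_n)$-quasiconformal maps from exhaustions of $X_j^b(\infty)\setminus \Sigma(\infty)$, with $\epsilon_n\to 0$. The limit form $\omega_j(\infty)$ inherits a holomorphic $1$-form structure away from the nodes and punctures.

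Finally I would read off the two cases. For a boundary curve $\alpha(t_n)$ with $|\alpha(t_n)|/\lambda(X_j(t_n))\to \ell_\alpha>0$, the loop persists under the quasiconformal approximations with definite flat length in the limit, so $X_j^b(\infty)$ retains a boundary component corresponding to $\alpha(t_n)$; when $|\alpha(t_n)|/\lambda(X_j(t_n))\to 0$, the loop is pinched in the limit, producing a puncture. The main technical obstacle I anticipate is handling the case where several short curves of $\mathcal{A}$ meet at a common zero (the situation flagged in the Remark preceding the definition of size), so that the combinatorics of $\partial X_j(t_n)$ is not simply a disjoint union of circles; verifying that the rescaled convergence respects these identifications, and that the expanding annulus contributions in Rafi's formula (\ref{eq:modulus}) do not corrupt the renormalization factor $\lambda(X_j(t_n))$, is the step that requires the most care, and is precisely what is controlled by \cite{EKZ} and \cite{BCGGM}.
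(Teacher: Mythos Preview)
Your overall strategy---renormalize by $\lambda(X_j(t_n))$ and invoke Theorem~10 of \cite{EKZ}---is the same as the paper's, and the first bullet is handled the same way. But you have missed the actual content of the second bullet. The issue is not merely that the boundary loop is ``pinched'' to a point; it is that the limiting $1$-form $\omega_j(\infty)$ is \emph{holomorphic} at that point rather than having a simple pole. A priori, a sequence of collapsing boundary loops could limit on an end of infinite area (a pole), in which case the limit surface would not have a puncture in the sense used later (compare Remark~\ref{rem:gauntlimit}, which relies on the plump limit having finite area). Your sentence ``the loop is pinched in the limit, producing a puncture'' and the deferral to \cite{EKZ} and \cite{BCGGM} do not address this.

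The paper supplies exactly this missing step with a short contradiction argument: if there were a pole at $p$, loops around $p$ would have length bounded below by some $a_0>0$, while the renormalized diameter of $X_j(t_n)$ is bounded above by a constant $C$ via \eqref{eq:boundedbelow}. Choosing a neighborhood $U$ of $p$ at $|\omega_j(\infty)|$-distance greater than $2C$ from a fixed basepoint, one sees that $\alpha(t_n)$ cannot lie in the image of $X_j(\infty)\setminus U$ (its normalized length goes to $0$, but any preimage loop would have length $\geq a_0$), yet $h_{t_n}(\partial U)$ must still meet $X_j(t_n)$, contradicting the diameter bound. You should add this argument, or else explain precisely which statement in \cite{EKZ} rules out the pole; the paper's author evidently did not regard it as a direct citation.
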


\begin{rem}
This is an example of what is called multi-scaling in \cite{BCGGM}. In order to achieve a limit, the $1$-forms need to be scaled by their size, which can depend on the component.
\end{rem}

\begin{proof} Theorem 10 of \cite{EKZ} essentially gives the convergence.  (We remark that in that paper if the assumption of the first bullet holds, one  acquires a pole if one takes  the  limit of the renormalized differential  on a region in $X(t_n)$ that includes $\alpha(t_n)$ in its interior.    In that case the curve $\alpha(t_n)$ itself limits on a geodesic in the homotopy class of a loop surrounding the puncture).  Here we just take a limiton the region $X_j(t_n)$ itself.   

We prove the second bullet. Since the length on the renormalized differential still goes to $0$, the surface must develop a puncture denoted $p$.  Arguing by contradiction, suppose though $\omega_j(\infty)$ has a pole at $p$. Then there is a positive lower bound $a_0$ for the length of any loop surrounding $p$. 
Fix a point $z_0\in X_j(\infty)$ with $z_0\neq p$.
The diameter of $X_j(t_n)$ measured with respect to the normalized metric  $\frac{|\omega_j(t_n)|}{\lambda (X_j(t_n))}$ is uniformly bounded; say by a constant $C$.  Since the distance from $z_0$   to $p$ measured in the metric induced by $|\omega_j(\infty)|$ is infinite,
choose  $U$ a neighborhood of $p$ such that the distance from $z_0$ to $U$   is greater than $2C$.   For large $t_n$, $h_{t_n}(z_0)\in X_j(t_n)$,  where     $h_{t_n}:X_j(\infty)\setminus U\to X(t_n)$ is the quasiconformal map defining the topology.    Since  $\frac{\omega_j(t_n)}{\lambda (X_j(t_n))}$ converges to some $\omega_j(\infty)$ on $X_j(\infty)\setminus U$,
we  cannot have $\alpha_{t_n}\subset  h_{t_n}(X_j(\infty)\setminus U)$.  This follows from the assumption  $\frac{|\alpha(t_n)|}{\lambda (X_j(t_n))}\to 0$,
while the  length of $h_{t_n}^{-1}(\alpha(t_n))$ with respect to $|\omega_j(\infty)|$ is bounded below by $a_0>0$.

    Thus $h_{t_n}(\partial U)$ contains points of  $X_j(t_n)$.  But the $|\omega_j(\infty)|$ distance between $z_0$ and $\partial U$ is at least $2C$ while the $\frac{|\omega_j(t_n)|}{\lambda (X_j(t_n))|}$ distance between their images is at most $C$. This is a contradiction.  \end{proof}

\begin{rem}
\label{rem:gauntlimit}

As a consequence of Proposition~\ref{prop:limit}
if the sequence is plump,  the limiting  $\omega(\infty)$ is a finite area holomorphic $1$-form on $Y(\infty)$.  In particular we still have vertical and horizontal line flows defined by $\omega(\infty)$. 
  If the sequence is gaunt,  then for  a (possibly empty) sequences of boundary curves $\alpha(t_n)$ satisfying   $\frac{|\alpha(t_n)|}{\lambda( X_j(t_n))}\to 0$, we still have punctures in the limit,  where the $1$-form is holomorphic. However  by definition there are boundary curves with $\frac{|\alpha(t_n)|}{\lambda(X_j(t_n))}$   bounded away from $0$. (Again note lengths $|\alpha(t_n)|$ are measured with respect to the $1$-form $\omega(t_n)$). In the limit we get a surface with boundary  corresponding to these curves.   
\end{rem}

 We adopt the notation that if a sequence of loops  $\alpha_j(t_n)$ on the boundary of a sequence of gaunt $Y_j(t_n)$ satisfy
$\frac{|\alpha_j(t_n)|}{\lambda(Y_j(t_n))}\to 0$, we will say it is a {\em short} sequence. Otherwise it is {\em long}.

We have the following which was essentially proved by Eskin, Mirzakhani, and Rafi in \cite{EMR}. 
\begin{lem}
\label{lem:common}
A  gaunt  sequence $Y_j(t_n)$ cannot have  two or more short sequences of boundary loops sharing a common  zero.
\end{lem}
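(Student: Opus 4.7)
The plan is to argue by contradiction. Assume that $\alpha(t_n)$ and $\beta(t_n)$ are two distinct short sequences of boundary loops of $Y_j(t_n)$ sharing a common zero $p$. Since both loops pass through $p$, I concatenate them at $p$ to form a closed curve $\delta_n := \alpha(t_n) \cdot \beta(t_n)^{-1}$ of flat length $|\alpha(t_n)| + |\beta(t_n)|$. The geodesic representative of its free homotopy class $[\delta_n]$ in $Y_j(t_n)$ then has flat length at most $|\alpha(t_n)| + |\beta(t_n)|$.

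Next I would observe that $[\delta_n]$ is non-trivial: otherwise $\alpha(t_n)$ and $\beta(t_n)$ would be freely homotopic, contradicting that they represent distinct boundary components. Viewing $Y_j(t_n)$ topologically, and resolving the singularity at $p$ so that $\alpha$ and $\beta$ become disjoint boundary circles joined by a short arc $\eta$, a regular neighborhood of $\alpha \cup \eta \cup \beta$ inside $Y_j(t_n)$ is a pair of pants $P_n$ whose three boundary circles are isotopic to $\alpha(t_n)$, $\beta(t_n)$, and a third curve $\gamma_n$ representing $[\delta_n]$.

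I then split into two cases. If $[\delta_n]$ is essential and not peripheral in $Y_j(t_n)$, its geodesic representative is an essential interior closed geodesic, so
\[
  \lambda(Y_j(t_n)) \;\le\; |\alpha(t_n)| + |\beta(t_n)|.
\]
Dividing by $\lambda(Y_j(t_n))$ and letting $t_n \to \infty$ forces $1 \le 0$ by the shortness of $\alpha(t_n)$ and $\beta(t_n)$, a contradiction. Otherwise $[\delta_n]$ is peripheral, freely homotopic to some boundary component $\gamma(t_n)$ of $Y_j(t_n)$ (necessarily distinct from $\alpha$ and $\beta$, since $\alpha \beta^{-1}$ cannot be freely homotopic to either alone). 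Then $\gamma_n$ and $\gamma(t_n)$ cobound an annulus $A_n$ in $Y_j(t_n)$, and the boundary of $P_n \cup A_n$ consists precisely of $\alpha(t_n), \beta(t_n), \gamma(t_n)$, all of which are already boundary components of $Y_j(t_n)$. Connectivity of $Y_j(t_n)$ then forces $Y_j(t_n) = P_n \cup A_n$, i.e.\ topologically a pair of pants. But the geodesic length of $\gamma(t_n)$ is at most $|\alpha(t_n)| + |\beta(t_n)|$, so $\gamma(t_n)$ is also short; hence every boundary loop of $Y_j(t_n)$ is short, making the sequence plump and contradicting gauntness.

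The main obstacle is the topological bookkeeping at the shared zero: one must correctly interpret a regular neighborhood of $\alpha \cup \beta$ as a pair of pants after resolving the cone point $p$, and in the peripheral case rule out any additional boundary components of $Y_j(t_n)$ lurking outside $P_n \cup A_n$, so that the pair-of-pants conclusion truly forces all boundary loops to be short.
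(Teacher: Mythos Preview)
Your argument is correct and takes a genuinely different, more elementary route than the paper. The paper's proof works with the single loop $\alpha(t_n)$ surrounding all the short boundary curves at the shared zero (your $\gamma_n$ in the two-curve case), but then argues via \emph{extremal} length: it rules out $\operatorname{Ext}(\alpha(t_n))\to 0$ by a thick-thin consideration, and in the remaining case invokes Lemma~3.9 of Eskin--Mirzakhani--Rafi to produce a separating curve $\tau(t_n)$ of small extremal length that would cut the short $\beta$'s away from $Y_j(t_n)$, contradicting that they lie on $\partial Y_j(t_n)$. Your proof, by contrast, stays entirely at the level of flat length and the definition of size $\lambda$: either $\delta_n=\alpha\beta^{-1}$ is essential, which immediately violates $\lambda(Y_j)\le|\alpha|+|\beta|$, or it is peripheral, forcing $Y_j$ to be a pair of pants with all boundary loops short, contradicting gauntness.

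What you gain is self-containment: no appeal to the EMR lemma or to extremal length estimates. What the paper's approach buys is that the external lemma absorbs the topological case analysis you identify as the ``main obstacle.'' Your bookkeeping is genuinely routine, though, once one observes that in the \emph{topological} surface $Y_j(t_n)$ (where the thick-thin curves can be realized disjointly) $\alpha$ and $\beta$ are honest distinct boundary circles; the shared zero in the flat realization becomes an arc $\eta$ of zero flat length, so the regular-neighborhood pair of pants $P_n$ is exactly as you describe. Your claim that $\gamma_n\not\simeq\alpha,\beta$ then follows because any annulus between $\gamma_n$ and $\alpha$ would have to lie in $P_n$ (since $\gamma_n$ separates $P_n$ from the rest and $\alpha\subset P_n$), which is impossible in a pair of pants. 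The peripheral case then forces $Y_j=P_n\cup A_n$ by the open-and-closed argument you give.
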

\begin{proof}
 We consider the sequence of loops $\alpha(t_n)$ surrounding the set of all  short boundary curves $\beta(t_n)$  sharing a zero.   The flat length of $\alpha(t_n)$  goes to $0$ on the normalized surface. It cannot be the case that the  extremal length of $\alpha(t_n)$ goes to $0$ for then $Y_j(t_n)$ would be a disc and the surrounded boundary loops  $\beta(t_n)$ would not all be short.  Then suppose the extremal length of $\alpha(t_n)$ is bounded away from $0$.   Let  $\gamma(t_n)$ be any closed curve on $Y_j(t_n)$ with bounded flat length.  Since the flat length of $\alpha(t_n)\to 0$,  by the exact same proof as given in Lemma 3.9 of \cite{EMR} there is a curve $\tau(t_n)\neq \alpha(t_n)$ separating $\gamma(t_n)$ from $\alpha(t_n)$ hence from the $\beta(t_n)$ with the  extremal length of $\tau(t_n)$ going to $0$. But then the boundary loops $\beta(t_n)$  are not on the boundary of $Y_j(t_n)$,  again  a contradiction. 
\end{proof}

\begin{defin}
Suppose $Y_j(t_n)$  is gaunt sequence.  We define  a {\em short boundary segment} $J(t_n)$ to be a connected segment on a short boundary loop  of  $Y_j(t_n)$ each of whose  endpoints is   either  a zero  or  a  point lying on a vertical segment that hits a zero. We require that there be no other such points in the interior of $J(t_n)$.
\end{defin} 

It is clear from the definition that short boundary segments are in {\em pairs}; every vertical segment leaving one hits the other.

\section{Main Propositions}
This section is devoted to 
 Proposition~\ref{prop:ergodic} and  Proposition~\ref{prop:separating}, the main ingredients in the proof of the main theorem. Their proofs  are based on ideas from 
  \cite{M} and  \cite{Mc}. We are supposing the vertical line flow $\phi^t$ is minimal, but not uniquely ergodic.  In \cite{M} it was shown that   as  $t\to\infty$, the Riemann surface $X(t)$ of $g_t(X,\omega)$ eventually leaves every compact set in the moduli space $\mathcal{M}_g$. As discussed previously this means that for all  large 
times $t$ there is a disjoint collection of simple closed curves $\alpha(t)$ whose extremal  or hyperbolic lengths approach $0$.  They define a {\em thick-thin} decomposition of $X(t)$.   The curves $\alpha(t)$ have representatives as geodesics in the flat metric defined by $\omega(t)$ on $X(t)$. Pulled back  by $g_t$ their  representatives as geodesics on $X$ have horizontal component of their holonomy going to zero and  since the flow $\phi^t$ is minimal, the vertical component of their holonomy   on $X$ goes  to infinity, so by the definition of the Teichm\"uller flow, their vertical  holonomy, hence lengths, denoted  $|\alpha(t)|$ on $X(t)$ satisfy 
 \begin{equation}
\label{eq:lengths}
\lim_{t\to\infty}e^{t/2}|\alpha(t)|=\infty.
\end{equation}

\begin{defin}
 Suppose   $\beta,\beta'$  are vertical segments of the same length $|\beta|=|\beta'|$ on a translation surface $(Y,\omega)$.  Given $0<\eta\leq 1$ we say that $\beta$ and $\beta'$   {\em $\eta$-interact}, if there are   subsegments of $\beta$ and $\beta'$ of length at least $\eta |\beta|$ that are the two vertical sides of an isometrically embedded rectangle on $(Y,\omega)$. 
 \end{defin}

In what follows we have a sequence of times $t_n\to\infty$ and consider the $1$ forms $\omega(t_n)$ of $g_{t_n}(X,\omega)$. Again we will think of $g_{t_n}$ as maps of the Riemann surface $X$ to a Riemann surface $X(t_n)$  
We will denote by $\omega_\ell(t_n)$ the  restriction of the $1$-form to a subsurface $X_\ell(t_n)$ of the thick-thin decomposition.


\begin{prop}
\label{prop:rectangles}
Suppose  $p,p'$ are generic points of distinct ($2$-sided) generic measures $\mu,\mu'$. 
Suppose 
  $\beta(t_n)$ and $\beta'(t_n)$ are vertical lines with endpoints  $g_{t_n}(p)$ and $g_{t_n}(p')$  of $g_{t_n}(X,\omega)$ of equal length  such that as $t_n\to\infty$, $$e^{t_n/2}|\beta(t_n)|\to\infty.$$
  Suppose they $\eta(t_n)$ interact. Then $\lim_{t_n\to\infty} \eta(t_n)=0$.   \end{prop}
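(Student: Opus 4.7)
The plan is to argue by contradiction: suppose, after passing to a subsequence, that $\eta(t_n)\geq \eta_0>0$ for all $n$. The idea is to pull the rectangles back to $X$, where they become very tall and very thin, and then use this to compare long-time intersection counts of the two orbits with a fixed horizontal transversal.

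First, let $R_n$ be the isometrically embedded rectangle on $g_{t_n}(X,\omega)$ realizing the $\eta(t_n)$-interaction, with horizontal width $w(t_n)$ and vertical height $h(t_n)\geq\eta(t_n)|\beta(t_n)|$. Its pullback $R_n^*=g_{t_n}^{-1}(R_n)\subset X$ is again an isometrically embedded rectangle, since $g_{t_n}$ is linear in flat coordinates and scales vertical distances by $e^{-t_n/2}$ and horizontal by $e^{t_n/2}$. So $R_n^*$ has vertical height
$$h_n^*=e^{t_n/2}h(t_n)\geq \eta_0\,e^{t_n/2}|\beta(t_n)| \longrightarrow \infty$$
and horizontal width $w_n^*=e^{-t_n/2}w(t_n)$, while the area bound $w_n^*h_n^*=w(t_n)h(t_n)\leq 1$ forces $w_n^*\leq 1/h_n^*\to 0$.

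The two vertical sides $\gamma_n',\gamma_n''$ of $R_n^*$ are subsegments of common length $h_n^*$ of the vertical orbits of $p$ and $p'$, and each point of $\gamma_n'$ is joined to the corresponding point of $\gamma_n''$ by a horizontal segment of length $w_n^*$ inside $R_n^*$. Since distinct invariant measures yield distinct transverse measures, choose a horizontal transversal $I=[a,b]$ with $\rho(I)>\rho'(I)$ and $b$ not an atom of $\rho$ (atoms are countable, so shrink $b$ slightly if necessary). For any $\delta>0$ and $n$ large enough that $w_n^*<\delta$, the rectangle's horizontal identification sends $\gamma_n'\cap[a,b-w_n^*]$ bijectively onto $\gamma_n''\cap[a+w_n^*,b]$, so
$$\text{card}(\gamma_n'\cap[a,b-\delta])\leq \text{card}(\gamma_n'\cap[a,b-w_n^*])=\text{card}(\gamma_n''\cap[a+w_n^*,b])\leq \text{card}(\gamma_n''\cap I).$$
For any $\epsilon>0$, Lemma~\ref{lem:effectivegen} applied with $\eta=\eta_0$ to the fixed intervals $[a,b-\delta]$ (with point $p$) and $I$ (with point $p'$) yields, for $n$ large,
\begin{align*}
\text{card}(\gamma_n'\cap[a,b-\delta])&\geq \rho([a,b-\delta])h_n^*-2\epsilon h_n^*/\eta_0,\\
\text{card}(\gamma_n''\cap I)&\leq \rho'(I)h_n^*+2\epsilon h_n^*/\eta_0.
\end{align*}
Combining with the chain above and dividing by $h_n^*$ gives $\rho([a,b-\delta])-\rho'(I)\leq 4\epsilon/\eta_0$. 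Letting $\epsilon\to 0$, then $\delta\to 0$, and using $\rho(\{b\})=0$ produces $\rho(I)\leq\rho'(I)$, contradicting the choice of $I$.

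The main technical obstacle is that Lemma~\ref{lem:effectivegen} must be applied to a \emph{fixed} horizontal interval (its threshold $L_0$ depends on the interval), so I cannot send $w_n^*\to 0$ and apply the lemma to the shrinking interval $[a,b-w_n^*]$ directly. Introducing the auxiliary fixed interval $[a,b-\delta]$ and sending $\delta\to 0$ \emph{after} $n\to\infty$ sidesteps this; the atom-free condition on $\rho$ at $b$ is precisely what allows $\rho([a,b-\delta])$ to recover $\rho(I)$ in the limit.
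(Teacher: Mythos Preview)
Your proof is correct and follows the same approach as the paper: pull the rectangle back to $(X,\omega)$, where its vertical sides become long subsegments of the orbits of $p$ and $p'$ of relative length at least $\eta_0$, and compare their intersection counts with a fixed horizontal transversal $I$ via Lemma~\ref{lem:effectivegen}. The paper's execution is slightly more direct---rather than tracking the width $w_n^*$ and taking a double limit in $\epsilon$ and $\delta$, it fixes $\epsilon$ small relative to $|\rho(I)-\rho'(I)|$ and shows the two pulled-back sides meet $I$ with cardinalities differing by at least $2$, which is impossible since any horizontal arc crosses the two vertical sides of an embedded rectangle alternately.
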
 

\begin{proof}
Suppose on the contrary for a subsequence of $t_n\to\infty$ they $\eta$-interact for some fixed $\eta>0$. That is, there exists subsegments 
$\hat\beta(t_n)\subset\beta(t_n)$ and $\hat\beta'(t_n)\subset\beta'(t_n)$ of equal length $\eta|\beta(t_n)|$ which are two vertical sides of a rectangle on $X(t_n)$.   Let $I$ be a horizontal interval with respect to the $1$-form on the base surface $(X,\omega)$ such that the transverse measures $\rho,\rho'$ corresponding to $\mu,\mu'$  satisfy $\rho(I)\neq \rho'(I)$. Let  $$\epsilon=\frac{\eta|\rho(I)-\rho'(I)|}{8}.$$  Then for $\eta, \epsilon$ and interval $I$, choose  $L_0$ large enough so that it satisfies  the conclusion of Lemma~\ref{lem:effectivegen} and also so  that $$L_0\geq \frac{2}{\epsilon}.$$ 
Let $\hat\beta=g_{t_n}^{-1}(\hat \beta(t_n))$ and $\hat\beta'=g_{t_n}^{-1}(\hat \beta'(t_n))$ the equal length vertical segments pulled back to  $(X,\omega)$ by the Teichm\"uller map $g_{t_n}^{-1}$.
 Then  $$|\hat \beta'|_\omega=|\hat\beta|_\omega=|g_{t_n}^{-1}(\hat \beta(t_n))|_\omega=\eta e^{t_n/2}|\beta(t_n)|\geq \eta L_0,$$ the last inequality holding for large enough $t_n$.
(The lengths on the left are measured with respect to the flat metric on the base surface $(X,\omega)$).

Applying 
Lemma~\ref{lem:effectivegen} to the subsegments $\hat\beta$ and $\hat\beta'$ and the interval $I$,
we get both 
$$|\text{card}(\hat\beta\cap I)-\rho(I)|\hat\beta|_\omega|\leq \frac{2\epsilon}{\eta}|\hat\beta|_\omega,\text{and}\
|\text{card}(\hat\beta'\cap I)-\rho'(I)|\hat\beta'|_\omega|\leq \frac{2\epsilon}{\eta}|\hat\beta'|_\omega.$$ The triangle inequality then gives

$$|\text{card}(\hat\beta\cap I)-\text{card}(\hat\beta'\cap I)|\geq 
|\hat\beta|_\omega|\rho(I)-\rho'(I)| -\frac{4\epsilon}{\eta} |\hat\beta|_\omega=\frac{8\epsilon}{\eta}|\hat \beta|_\omega-\frac{4\epsilon}{\eta} |\hat\beta|_\omega\geq 4\epsilon L_0\geq 2.$$

  Applying  $g_{t_n}$
we see  that the horizontal segment $g_{t_n}(I)$ satisfies  
$$|\text{card}(g_{t_n}(I)\cap \hat\beta(t_n))-\text{card}(g_{t_n}(I)\cap \hat\beta'(t_n))|\geq 2,$$ which means $\hat\beta(t_n),\hat\beta'(t_n)$  cannot be two vertical sides of an embedded  rectangle.

\end{proof}


Now suppose  $\mu_1,\ldots, \mu_k$ are the ergodic probability measures and $\nu_1,\ldots, \nu_m$ are generic but not ergodic probability measures for the minimal vertical flow $\phi^t$ of $(X,\omega)$.  We have $k\geq 2$. Our goal in proving Theorem~\ref{thm:main}  is to show $$k+m\leq g+s-1.$$


Let $\mu$ be Lebesgue measure on $(X,\omega)$. 
Since the ergodic measures are the extreme points of the simplex of invariant measures,  there exists $a_i\geq 0$ such that  $$\mu=\sum_{i=1}^k  a_i\mu_i.$$

 For each $i$ the set of generic points of $\mu_i$ 
has Lebesgue measure $a_i$. 
By choosing a measure in the interior of the simplex, and taking the vertical line flow with that measure (on a possibly different $(X,\omega))$  we can assume $a_i>0$ for all $i$.  Let $$A_0=\min_i a_i.$$

\begin{prop}
\label{prop:ergodic}
  For any sequence of times $t_n\to\infty$ there is a subsequence, again denoted $t_n$,   an integer $\ell\geq k$,  a number $a_0>0$ and  disjoint open subsurfaces  or cylinders  $X_1(t_n), \ldots, X_\ell(t_n)\subset g_{t_n}(X,\omega)$ 
 such that  
  \begin{enumerate}
\item the boundary of each $X_i(t_n)$ is made up of 
   a union of loops  $\alpha_i(t_n)$ each of which in turn is a union of saddle connections and such that
 $\lim_{t_n\to\infty}\text{Ext}(\alpha_i(t_n))=0$.  
\item for each $i$, $\text{area} (X_i(t_n))\geq a_0$ 
\item   If $X_i(t_n)$ are not cylinders  and $\omega_i(t_n)$ is the restriction of $\omega(t_n)$ to $X_i(t_n)$ then $\lim_{t_n\to\infty}\omega_i(t_n)$ exists and is a non-zero finite area $1$-form $\omega_i(\infty)$  on a surface $X_i(\infty)$ in the Deligne-Mumford compactification,
\item  For  each  sequence  $X_i(t_n)$ there  is an ergodic  measure $\mu_j$,   a generic point $p_j$ of  $\mu_j$, independent of $t_n$ such that   $g_{t_n}(p_j)\in 
 X_i(t_n)$. In addition,  if the $X_i(t_n)$ are not cylinders, then $g_{t_n}(p_j)$ has a limit in a compact subset of $X_i(\infty)$.
\item If  $X_i(t_n)$ and $\mu_j$ are related as in (4)  and $q$ is a generic point for a measure $\nu\neq \mu_j$, then  for large $t_n$, $g_{t_n}(q)
\notin \overline{X_i(t_n)}$.
\item $\lim_{t_n\to\infty} \text{area}( g_{t_n}(X,\omega)\setminus \cup_i X_i(t_n))\to 0$.

\end{enumerate}
\end{prop}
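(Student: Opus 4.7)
The plan is to associate to each ergodic measure $\mu_j$ a component of the thick-thin decomposition of $g_{t_n}(X,\omega)$ via the image of a carefully chosen generic point, then enlarge the collection to cover almost all of the surface. For each $j=1,\ldots,k$, fix $p_j\in G_j$, the $\mu_j$-generic set, chosen to be a Lebesgue density point of $G_j$ (possible since $\mu(G_j)=a_j>0$). Let $X_{\sigma(j)}(t_n)$ be the component (thick subsurface or cylinder) of the complement of the short curves $\mathcal{A}(t_n)$ in $g_{t_n}(X,\omega)$ that contains $g_{t_n}(p_j)$. Passing to a subsequence, we may assume $\sigma(j)$ is $n$-independent, each $X_{\sigma(j)}(t_n)$ has a fixed type (cylinder, plump, or gaunt), and the limits from Proposition~\ref{prop:limit} exist with $g_{t_n}(p_j)$ converging into the interior; this gives (1), (3), and the second half of (4).

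The central step is the injectivity of $\sigma$, which yields $\ell\geq k$ and the first half of (4). Suppose $\sigma(j)=\sigma(j')$ for some $j\neq j'$, so that $g_{t_n}(p_j)$ and $g_{t_n}(p_{j'})$ lie in a common component $Y(t_n)$. The plan is to produce a rectangle in $Y(t_n)$ whose two vertical sides pass through $g_{t_n}(p_j)$ and $g_{t_n}(p_{j'})$, with height comparable to $\lambda(Y(t_n))$ and width bounded below in the renormalized metric. In a cylinder this is essentially immediate from the cylinder geometry. In a thick subsurface, one uses Proposition~\ref{prop:limit} and Remark~\ref{rem:gauntlimit} to work on the limit $X_j^b(\infty)$ with renormalized form $\omega_j(\infty)$: here the two limit points lie in a finite-area surface, and a recurrence argument for the vertical flow produces long parallel vertical segments through them bounding an embedded rectangle of definite width. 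Transferring back to $Y(t_n)$ and using equation $(\ref{eq:lengths})$ with Theorem~\ref{thm:Rafi} to verify $e^{t_n/2}\lambda(Y(t_n))\to\infty$, Proposition~\ref{prop:rectangles} applies and forces the width to shrink, a contradiction. The identical argument applied to pairs $(p_j,q)$ with $q$ generic for $\nu\neq\mu_j$ establishes (5).

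For (2), exploit that $p_j$ is a density point of $G_j$: there is a small $\omega$-rectangle $R_j\ni p_j$ with $\mu(R_j\cap G_j)\geq\tfrac{1}{2}a_j\,\mu(R_j)$. Every point of $g_{t_n}(R_j\cap G_j)$ is a generic point of $\mu_j$, and those that leave $X_{\sigma(j)}(t_n)$ must, by the rectangle argument just sketched, enter thick-thin components disjoint from every $X_{\sigma(j')}(t_n)$ with $j'\neq j$. Adjoining these overflow components enlarges the family while preserving that each $X_i(t_n)$ carries a generic point of exactly one $\mu_j$; pigeonholing among finitely many components shows some chosen component has area bounded below by a uniform $a_0>0$, and repeating across $j$ gives (2). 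For (6), iterate the augmentation: any remaining thick component of area bounded away from zero must, by (5), contain a generic point of some $\mu_j$, so we adjoin it. On a diagonal subsequence the leftover area tends to zero.

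The hard part is the rectangle construction for gaunt thick subsurfaces in step 2. Here the limit surface has boundary, $\omega_j(\infty)$ is defined only on its interior, and one must verify that two prescribed interior points are genuinely endpoints of a common embedded rectangle rather than being separated by short boundary segments or by short-length trapezoids in the sense of Section 2.2. The required control on the boundary geometry comes from Lemma~\ref{lem:common} and Remark~\ref{rem:gauntlimit}; a secondary delicacy in the cylinder case is ensuring that the two generic points do not lie on a common vertical leaf of the cylinder, which follows from the nonverticality of the core curve guaranteed by minimality of the vertical flow.
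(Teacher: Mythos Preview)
Your approach inverts the paper's logic in a way that creates a genuine gap. The paper works \emph{area-first}: it defines the $X_i(t_n)$ to be exactly those thick--thin components (or cylinders) with area bounded away from zero, so (2) holds by construction. Area bounded below forces the size $\lambda(X_i(t_n))$ to be bounded below (via the triangulation estimate from \cite{EKZ}), so no renormalization is needed and the limiting $\omega_i(\infty)$ is automatically a finite-area holomorphic $1$-form on a punctured surface with no boundary. In that setting (4) follows because the full-measure union of ergodic generic sets must meet every open subset of $X_i(\infty)$; (5) follows by covering $X_i(\infty)$ with isometrically embedded discs and noting that two generic-point limits in a common disc would bound a rectangle, contradicting Proposition~\ref{prop:rectangles}; and (6) is immediate. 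The gaunt/plump distinction never arises here---it enters only later, in Proposition~\ref{prop:separating}, for the zero-area complement where the non-ergodic generic points live.

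You instead fix the generic points $p_j$ first and take $X_{\sigma(j)}(t_n)$ to be whichever component receives $g_{t_n}(p_j)$. Nothing guarantees that this component has area bounded below, so its size may tend to zero and its renormalized limit may be gaunt, i.e.\ a surface with honest boundary. You correctly flag this as ``the hard part,'' but Lemma~\ref{lem:common} and Remark~\ref{rem:gauntlimit} do not supply the rectangle: on a gaunt limit the horizontal structure decomposes into trapezoids, cylinders and minimal domains, and two interior points lying in distinct trapezoids separated by long boundary need not bound any embedded rectangle at all. Handling this is precisely what forces the paper to develop the type (I)--(IV) classification and the trapezoid combinatorics of Proposition~\ref{prop:separating}; you would have to import that entire apparatus here to close the gap. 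The density-point and augmentation arguments you propose for (2) and (6) are then patches for a difficulty that the area-first ordering simply never creates.
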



\begin{defin}
We say that  $X_i(t_n)$ given in (4) is  {\em associated} to the generic point $p_j$ and generic measure $\mu_j$
\end{defin}


\begin{rem}
Some of the ideas in this   proposition are  contained in  Theorem 1.4 of  \cite{Mc}. One difference is  that in that theorem there is  the assumption that there is no loss of mass in passing to limits of the $1$-form on surfaces in  the Deligne-Mumford compactification.  Here we  allow for the possibility of cylinders with area   bounded below.
\end{rem}
\begin{proof}
By passing to a subsequence we can assume that the Riemann surfaces $X(t_n)$ converge in the Deligne-Mumford compactification to a  possibly disconnected Riemann surface $X(\infty)$  with  punctures.  A collection of curves have their extremal length approaching $0$ so (1) holds.  We consider sequences of those components $X_i(t_n)$ of the $(\epsilon_0,\epsilon_1)$ thick-thin decomposition of  $g_{t_n}(X,\omega)$ whose area determined  by the restriction $\omega_i(t_n)$   of the $1$-form $\omega(t_n)$ to $X_i(t_n)$ is bounded away from $0$. For these   (2) holds.  Among those that are not cylinders,  
 the area $\int_{X_i(t_n)}|\omega_i(t_n)|^2$ is bounded above by the size $\lambda(X_\ell(t_n))$ up to a uniform multiplicative constant. This follows from the fact, Lemma 4.2 of \cite{EKZ}, that one can triangulate $X_i(t_n)$  by saddle connections that are comparable in length to the size.  Thus the size $\lambda(X_i(t_n ))$ is bounded below away from $0$.  Theorem 10 of \cite{EKZ} says that by  passing to subsequences 
$\omega_i(t_n)$ converges to a nonzero finite area holomorphic  $1$-form $\omega_i(\infty)$ on the limiting Riemann surface $X_i(\infty)$.
The limiting $1$-form does not have poles at the punctures. This proves (3). 
 
 Now consider any such sequence $X_i(t_n)$ which has limit $X_i(\infty)$  which has positive area. 
 Since the union of the sets of generic points for the ergodic measures  has full Lebesgue measure, it follows that  any open set $U\subset X_i(\infty)\setminus \Sigma$,  contains limit points of $g_{t_n}(p_j)$ where $p_j$ is generic for some ergodic measure $\mu_j$. (This argument appears in Corollary A.3 in the Appendix of \cite{Mc}) and gives (4) in the non cylinder case.
 
 We next consider the  case that there are components $X_i(t_n)$ of the $(\epsilon_0,\epsilon_1)$  thick-thin decomposition that form  a sequence of cylinders  with areas bounded below away from $0$.   Since the areas are bounded below and the set of generic points for the ergodic measures  has full measure, it follows that by passing to a subsequence that there is a generic point $p_j$ for an ergodic $\mu_j$ such that $g_{t_n}(p_j)\in X_i(t_n)$ for all $t_n$.  This gives (4) in both cases.
 

 We now verify (5). In the case of  a limiting component $X_i(\infty)$ in the Deligne-Mumford compactification, 
  we note  that  limits points of  $g_{t_n}(q)$ and $g_{t_n}(p_j)$ cannot lie  in the same isometrically embedded open disc of $X_i(\infty)$.   For if they did, then there would be vertical segments $\gamma$ and $\gamma_i$ of $\omega_i(\infty)$,  of the same length each containing a limit point and which are two sides of a rectangle $R(\infty)$.   The pair $\gamma$ and $\gamma_i$ are limits of vertical segments $\gamma(t_n)$ and $\gamma_i(t_n)$ on the approximate $X_i(t_n)$ through $g_{t_n}(p)$ and $g_{t_n}(p_j)$ resp, that bound a rectangle $R(t_n)\to R(\infty)$.   Since $p_j$ is $2$-sided  the vertical sides would $\eta(t_n)$ interact, where $\eta(t_n)\to 1$.  This would  violate Proposition~\ref{prop:rectangles}.   
   We  finish the proof of (5) in the non-cylinderical case by noting 
that we   can cover $X_i(\infty)$ with open discs. 
  
We see similarly that there cannot be  images of  generic points $q$ and $p_j$ in the same cylinder for a sequence $t_n\to\infty$.  Otherwise  
there would be  two vertical segments of the same  length through these image points bounding a rectangle.   We can take these vertical segments to be longer than the circumference of the cylinder, so that when pulled back to $(X,\omega)$ under $g_{t_n}^{-1}$, their lengths go to infinity.  
This is again a  contradiction. Thus (5)  holds for cylinders.

  Since the set of generic points for ergodic measures has full Lebesgue measure we see that (6) holds. 
\end{proof}
The rest of this section deals with generic points  of measures that are not ergodic.  We wish to associate a subset  for each as we did for ergodic measures. 
This is accomplished in Proposition~\ref{prop:separating}.  The main difficulty is that the set of generic points for generic but not ergodic measures has Lebesgue measure $0$, so we cannot use  the method  in Proposition~\ref{prop:ergodic} to deal with them. 
We continue to use  the notation $X_i(t_n)$ for the components of the $(\epsilon_0,\epsilon_1)$ thick-thin decomposition of $X(t_n)$  associated to ergodic measures. We will denote by $Y(t_n)$ a component of the complement of $\cup_i X_i(t_n)$.




We next divide each  $Y(t_n)$ into $(\epsilon_0,\epsilon_1)$ thick-thin components $Y_j(t_n)$.
As in  Remark~\ref{rem:gauntlimit},
we pass to the limit of $\omega_j(\infty)$ on  $Y_j(\infty)$ of the normalized $\omega_j(t_n)/\lambda(Y_j(t_n))$.  If the sequence is plump, then $Y_j(\infty)$ does not have boundary and has finite area. 

\begin{defin} We say a subset $W(t_n)$ of a component $Y_j(t_n)$ is
{\em associated} to a  generic point $q$ for a ($2$-sided) generic non-ergodic  measure $\nu$   and vertical   segment $\beta(t_n)$ through $g_{t_n}(q)$,     if
$\beta(t_n)\subset W(t_n)$, 
$$e^{\frac{t_n}{2}}|\beta(t_n)|\to\infty\ \text{and}\  |\beta(t_n)|\emul \lambda(Y_j(t_n))$$
and such that exactly one of the following holds. 
\begin{itemize} 

\item $W(t_n)$ is a cylinder whose limit on $Y_j(\infty)$ is a cylinder of $\omega_j(\infty)$.
\item  $W(t_n)$  is a minimal domain of $\omega_j(\infty)$ 
\item   $W(t_n)$ is a union of  horizontal trapezoids  $\text{Trap}_k(t_n)$ each of which has two of its sides on two long boundary components and  
$\beta(t_n)$   crosses the trapezoids and   joins nonzero points on a pair of   short boundary loops of $Y_j(t_n)$.
\item   $W(t_n)$ is a union of  horizontal trapezoids  $\text{Trap}_k(t_n)$ each of which has two of its sides on two long boundary components of $Y_j(t_n)$ and such that $\cup_k \text {Trap}_k(t_n)$ 
contains  a closed geodesic $\gamma(t_n)$ which is isotopic to the composite of $\beta(t_n)$  and a horizontal arc in one of the trapezoids. The arcs of the trapezoids are not isotopic to a boundary component.  
 
 \end{itemize}
We will refer to these $W(t_n)$ given  above  as type (I), (II), (III) or (IV).
Types (III) and (IV) only can occur in a gaunt sequence. 
\end{defin}

 \begin{prop}
\label{prop:separating}
 
 Suppose there are $m$ generic but non-ergodic invariant measures $\nu_i$ with generic points $q_i$ contained in $Y_j(t_n)$.
For large $t_n$ there are $m$ disjoint subsets    $W_1(t_n),\ldots, W_m(t_n)$ where $W_i(t_n)$ is associated to $q_i$ and vertical $\beta_i(t_n)$ and  for each $i$ there is $j$ such that $W_i(t_n)\subset Y_j(t_n)$.  
 
\end{prop}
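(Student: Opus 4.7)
The plan is to assign, for each generic non-ergodic measure $\nu_i$ with generic point $q_i\in Y_j(t_n)$, a vertical segment $\beta_i(t_n)$ through $g_{t_n}(q_i)$ of flat length $|\beta_i(t_n)|\emul\lambda(Y_j(t_n))$, and then read off $W_i(t_n)$ from where $\beta_i(t_n)$ sits relative to the renormalized limit. Such a segment automatically satisfies $e^{t_n/2}|\beta_i(t_n)|\to\infty$: in the plump case $\lambda(Y_j(t_n))$ is bounded below on the renormalized surface, while in the gaunt case $\lambda(Y_j(t_n))\emul|\alpha(t_n)|$ for any long boundary $\alpha(t_n)$, and \eqref{eq:lengths} applies directly. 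After passing to a subsequence, Proposition~\ref{prop:limit} produces $\omega_j(t_n)/\lambda(Y_j(t_n))\to\omega_j(\infty)$ on $Y_j(\infty)$, and the renormalized $\beta_i(t_n)$ have length $\emul 1$ on $\omega_j(\infty)$, limiting after further extraction to vertical segments $\beta_i(\infty)$.

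Strebel's decomposition of the vertical direction on $(Y_j(\infty),\omega_j(\infty))$, together with the horizontal trapezoid structure inherited from long boundary components in the gaunt case, breaks $Y_j(\infty)$ into vertical cylinders, vertical minimal domains, and, when there is boundary, trapezoid unions. Depending on where $\beta_i(\infty)$ lies, define $W_i(t_n)$ to be the approximating vertical cylinder in $Y_j(t_n)$ (type I), the approximating vertical minimal domain (type II), the union of horizontal trapezoids that $\beta_i(t_n)$ crosses while joining two short boundary segments (type III), or the union of horizontal trapezoids carrying a closed geodesic isotopic to $\beta_i(t_n)$ composed with a horizontal arc (type IV). Lemma~\ref{lem:common} rules out degenerate situations in which several short boundaries pinch onto a single zero, ensuring each type is well-defined.

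Disjointness is the heart of the argument. Suppose $W_i(t_n)\cap W_{i'}(t_n)\ne\emptyset$ for $i\ne i'$. The goal is to produce equal-length vertical segments $\tilde\beta_i,\tilde\beta_{i'}$ through $g_{t_n}(q_i)$ and $g_{t_n}(q_{i'})$ of length $\emul\lambda(Y_j(t_n))$ that $\eta$-interact for some $\eta>0$ bounded away from $0$, contradicting Proposition~\ref{prop:rectangles}. In type (I), two parallel vertical leaves of a vertical cylinder bound a rectangle whose width is their horizontal separation, of definite size after renormalization. In type (II), density of vertical leaves in a minimal domain of $\omega_j(\infty)$ yields approximately parallel vertical subsegments of $\tilde\beta_i,\tilde\beta_{i'}$ of comparable length bounding a rectangle of definite renormalized width. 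In types (III) and (IV), two vertical segments crossing a common horizontal trapezoid bound an isometric rectangle inside that trapezoid, so arranging $\tilde\beta_i,\tilde\beta_{i'}$ to traverse a common trapezoid produces the required interaction.

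The principal technical obstacle will be the gaunt case with trapezoids: one must check that the overlap of $W_i(t_n)$ and $W_{i'}(t_n)$ really does force the two vertical segments to traverse a common trapezoid over a length comparable to $\lambda(Y_j(t_n))$ and not vanishingly small relative to it. The length normalization $|\beta_i(t_n)|\emul\lambda(Y_j(t_n))$, the pairing structure on short boundary segments, and Lemma~\ref{lem:common} should combine to yield rectangles whose renormalized height and width are uniformly bounded below, delivering the required $\eta$ and completing the contradiction.
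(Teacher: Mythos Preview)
Your outline captures the broad shape of the argument, but there is a genuine gap in the gaunt case: you implicitly assume that a vertical segment $\beta_i(t_n)$ through $g_{t_n}(q_i)$ of renormalized length $\emul 1$ either stays inside $Y_j(t_n)$ or exits through short boundary loops. In fact it may exit through a \emph{long} boundary loop $\alpha(t_n)$, and then none of your four cases applies. The paper handles this as follows: since $|\alpha(t_n)|\emul\lambda(Y_j(t_n))$, the expanding-annulus contribution to \eqref{eq:modulus} on the $Y_j$ side is bounded, so the divergence of $1/\text{Ext}(\alpha(t_n))$ must come either from an adjacent cylinder of modulus $\to\infty$ (which is then taken as $W_i(t_n)$) or from the expanding annulus in a neighboring thick piece $Y_\ell(t_n)$ with $\lambda(Y_\ell(t_n))/\lambda(Y_j(t_n))\to\infty$. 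In the latter case $\alpha(t_n)$ is short for $Y_\ell(t_n)$, one extends $\beta_i(t_n)$ to length $\emul\lambda(Y_\ell(t_n))$ inside $Y_\ell(t_n)$, and repeats the entire case analysis there. Because sizes strictly increase along this cascade and there are only boundedly many thick pieces, the process terminates. This is why the conclusion of the proposition reads ``for each $i$ there is $j$ such that $W_i(t_n)\subset Y_j(t_n)$'': the associated $W_i$ need not live in the component where $g_{t_n}(q_i)$ started. Your proposal omits this mechanism entirely.

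Two smaller points. First, in the plump case you assert the existence of $\beta_i(t_n)$ of length $\emul\lambda(Y_j(t_n))$ through $g_{t_n}(q_i)$ without checking that it stays in $Y_j(t_n)$; the paper proves this with a surgery argument (if a segment of length $\delta_n\lambda(Y_j(t_n))$ with $\delta_n\to 0$ hit boundary in both directions, concatenating with boundary arcs would produce an essential curve of length $o(\lambda(Y_j(t_n)))$, contradicting the definition of size). Second, types (I) and (II) in the paper come from the \emph{horizontal} Strebel decomposition of $\omega_j(\infty)$, not the vertical one; the vertical $\beta_i(\infty)$ serve as cross-sections to the horizontal flow, and in the minimal-domain case one uses horizontal first returns to build the interacting rectangle. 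Your ``density of vertical leaves'' formulation does not straightforwardly yield rectangles of definite height.
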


\begin{proof}

%

First suppose  $Y_j(t_n)$  with corresponding $\omega_j(t_n)$ is a plump sequence.    By  Proposition~\ref{prop:limit} its limit $\omega_j(\infty)$ does not have poles at the punctures. 
Suppose there are $m'$ generic points $q_i$ of distinct non-ergodic generic measures such that $g_{t_n}(q_i)\in Y_j(t_n)$. In this case we will prove that there are $m'$ disjoint $W_i(t_n)$ of types (I) and  (II)  that are subsets of  $Y_j(t_n)$ and are associated to these generic points.

We claim there exists $\delta_0>0$ such that for any generic point $q$,  and large $t_n$,  a vertical segment shorter than $\delta_0 \lambda(Y_j(t_n))$ through $g_{t_n}(q)\in Y_j(t_n)$ does not leave $Y_j(t_n)$ in both directions.

Arguing by contradiction,  suppose there is a sequence $\delta_n\to 0$ such that the vertical segment $\beta(t_n)$ in both directions through $g_{t_n}(q)$ of length at most $\delta_n\lambda(Y_j(t_n))$ hits  boundary  components $\alpha_1(t_n)$ and $\alpha_2(t_n)$ of $Y_j(t_n)$.    By definition of  plumpness  there is $\epsilon_n\to 0$ such that  $|\alpha_i(t_n)|\leq  \epsilon_n\lambda(Y_j(t_n))$. If $\alpha_1(t_n)\neq \alpha_2(t_n)$ then the concatenation 
 $\beta(t_n)* \alpha_1(t_n)*\beta^{-1}(t_n)*\alpha_2(t_n)$ of arcs produces a closed essential curve, with length at most  $(2\delta_n+2\epsilon_n)\lambda(Y_j(t_n))$.  Since $2\delta_n+2\epsilon_n\to 0$, we have a contradiction to the definition of $\lambda(Y_j(t_n))$.  
 If $\alpha_1(t_n)=\alpha_2(t_n)$  there is a similar surgery using a segment of $\alpha_1(t_n)$ joining the endpoints of $\beta(t_n)$.  This  proves the claim.  

 Since for all boundary components $\alpha(t_n)$, $\frac{\lambda(Y_j(t_n))}{|\alpha(t_n)|}$ is bounded away from $0$,  
the claim, and (\ref{eq:lengths}) imply  that $$e^{t_n/2}|\beta(t_n)|\to\infty.$$

We now take the horizontal foliation of $Y_j(\infty)$ defined by $\omega_j(\infty)$ and the limits $\beta_i(\infty)$  of the  $\beta_i(t_n)$.    Each $\beta_i(\infty)$ contains a subsegment $\beta_i'(\infty)$ of length proportional to $\beta_i(\infty)$ such that $\beta_i'(\infty)$ is contained in a horizontal domain $W_i(\infty)$; either a cylinder or a minimal domain.  
  
  Now we claim that    
    $\beta_i'(\infty)$ and $\beta_j'(\infty)$ cannot be contained in the same domain for $i\neq j$. For otherwise  in the minimal case choosing  them as cross sections for the horizontal line flow, we would find  segments of each of length comparable to $\beta_i(\infty)$ that bound a rectangle, which is impossible by
     Proposition~\ref{prop:rectangles}.  The same argument holds for cylinders.  Thus the approximate 
$Y_j(t_n)$  then  contains $m'$  disjoint subsurfaces $W_i(t_n)$, which finishes the proof in   
the plump case. 

Next suppose  the sequence $Y_j(t_n)$ is gaunt.   
Again take its limit $Y_j(\infty)$ with $1$-form $\omega_j(\infty)$.   Now there are long boundary components so the  limiting $Y_j(\infty)$  has boundary.   Let $B_0$ the sum of the lengths of the boundary components of $Y_j(\infty)$. 
 Unlike the plump case, for any fixed $\delta_0$ it is possible as  $t_n\to\infty$ there might be a vertical segment that  might cross boundary  components in both  directions while being shorter than $\delta_0\lambda(Y_j(t_n))$. 
    
   Now consider a vertical segment $\beta_i(t_n)$ through a limit point of  $g_{t_n}(q_i)\in Y_j(t_n)$ of the generic point  $q_i$ which after renormalization has  length $2B_0$.   Suppose it remains in $Y_j(\infty)$.   The first possibility is it enters a cylinder 
or minimal domain that comes from the horizontal line flow. If that happens we associate the approximate  domain to it just as in the plump case.  

 
 The next possibility is a subsegment of $\beta_i(t_n)$ joins two short boundary segments and does not enter  a cylinder or minimal domain. Its length is at least comparable to $\lambda_j(t_n)$.   It is  entirely contained  in a union of  trapezoids $W_i(t_n)$.

This gives the type (III) possibility.  No other $\beta_k(t_n)$ may have a subsegment contained in any of the same trapezoids intersected  by $\beta_i(t_n)$. Otherwise $\beta_i(t_n)$ and $\beta_k(t_n)$ would $\eta$ interact for some $\eta>0$, which again is impossible by    Proposition~\ref{prop:rectangles}.

The next possibility is that  the limiting  $\beta_i(\infty)$ is  still contained in a union of trapezoids, but does not join two short boundary loops.   
Since   the sum of distances across the union of trapezoids is at most $B_0$ and  $\beta_i(\infty)$ has length $2B_0$, it  must return to some trapezoid. Closing the path up in that trapezoid using a horizontal segment  we produce   a closed loop,  denoted  $\gamma_i(\infty)$.   %
The loop $\gamma_i(\infty)$ and the trapezoids are approximated by loops $\gamma_i(t_n)$  and approximating  trapezoids $\text{Trap}(t_n)$.  We let $W_i(t_n)$ be again the union of trapezoids. This gives a type (IV)  $W_i(t_n)$.  As is the case of type (III) sets $W$, no other $\beta_k(t_n)$ may have a subsegment contained in any of the same trapezoids intersected  by $\beta_i(t_n)$.

Now assume none of the previous possibilities hold so the  
  the vertical segment   $\beta_i(t_n)$ through $g_{t_n}(q)$  of lengths $2B_0$  hits a boundary  loop of $Y_j(t_n)$  in both directions, at least one of which, denoted $\alpha(t_n)$  is long.   Since the length $|\alpha(t_n)|$ is 
comparable to $\lambda(Y_j(t_n))$,  the estimate (\ref{eq:modulus}) shows the modulus of the corresponding expanding annulus of $\alpha(t_n)$ inside $Y_j(t_n)$ that $\beta_i(t_n)$ crosses is uniformly bounded. Then there must either be a cylinder $Y_\ell(t_n)$ of  modulus going to infinity as $t_n\to\infty$ isotopic to $\alpha(t_n)$, or an expanding annulus of  modulus going to infinity in a neighboring $Y_\ell(t_n)$ of the thick-thin decomposition.   If there is a cylinder $Y_\ell(t_n)$ with modulus  going to infinity we can associate to $\beta_i(t_n)$ that  cylinder and denote it by $W_i(t_n)$. If there is not such a cylinder, but rather an expanding annulus in a neighboring $Y_\ell(t_n)$ then
  (\ref{eq:modulus}) 
   shows that as $t_n\to\infty$,   $$\frac{\lambda(Y_\ell(t_n))}{\lambda(Y_j(t_n))}\to\infty.$$    
   This implies that $\alpha(t_n)$ is a short loop of $Y_\ell(t_n)$.  
   
   We  extend $\beta_i(t_n)$ to have  length $2B_0\lambda(Y_\ell(t_n))$. If it remains in  $Y_\ell(t_n)$, then as in the previous discussions,  we associate the corresponding subsurface  $W_i(t_n)\subset Y_\ell(t_n)$ to $\beta_i(t_n)$ and then the vertical segment through any other generic point cannot intersect $W_i(t_n)$.    If $\beta_i(t_n)$  intersects a short boundary loop $\alpha(t_n)$ of $Y_\ell(t_n)$,  then exactly as before, a subsegment of $\beta_i(t_n)$ of length at least $\delta_0\lambda(Y_\ell(t_n)$ joins the short boundary   components of $Y_\ell(t_n)$, and is associated to a $W_i(t_n)$.   Suppose finally that $\beta_i(t_n)$  hits a long boundary loop of $Y_\ell(t_n)$.  Then exactly as in the previous paragraph it must then enter a cylinder of big modulus or enter a  neighboring $Y_k(t_n)$ of the thick-thin decomposition with $\frac{\lambda(Y_k(t_n))}{\lambda(Y_\ell(t_n))}\to\infty$.  If it is a cylinder with modulus going to infinity, that is the associated subset. 

If not, 
and rather it enters a neighboring $Y_k(t_n)$, then we repeat the process by extending the vertical segment in $Y_k(t_n)$.  The condition of entering a neighboring component of the thick-thin decomposition with increasing size $\lambda$ can only occur a bounded number of times, in terms of the genus.    
Therefore after a bounded number of steps this process must terminate with some $W_i(t_n)$ associated to $\beta_i(t_n)$.

 \end{proof}

The following Lemma  gives additional restrictions on sets $W$ for different measures.
Recall  $\emul$ means up to uniform multiplicative constants independent of $t_n$. 
They all will follow from   Proposition~\ref{prop:rectangles}.
First we need a notion of closeness of interior zeroes  to the boundary.

\begin{defin}
An interior zero  $z(t_n)$ of  $Y_j(t_n)$ is close to the short loop $\alpha(t_n)$  on the boundary of $Y_j(t_n)$ if a  vertical separatrice $\beta(t_n)$ leaving $z(t_n)$ hits $\alpha(t_n)$ and $$\lim_{t_n\to\infty} \frac{|\beta(t_n)|}{\lambda(Y_j(t_n))}\to 0.$$
\end{defin}

\begin{lem}
\label{lem:interact}

For large $t_n$ 
\begin{enumerate}
\item a pair of gaunt sequences   $Y_k(t_n),Y_\ell(t_n)$ cannot have vertical segments $\beta_k(t_n),\beta_\ell(t_n)$ associated to type (III) sets $W_k(t_n)$ and $W_\ell(t_n)$   that have endpoints in $J_k(t_n)\cap J_\ell(t_n)$ where these are  short boundary segments on a common boundary loop. 
\item 
If  $Y(t_n)$ is a component of  the complement of the union of domains $X_j(t_n)$ associated to ergodic measures  there is no type (III) $W(t_n)$ with associated vertical segment $\beta(t_n)$ with an endpoint on $\partial Y(t_n)$. 
\item 
If  $W_1(t_n),W_2(t_n)\subset Y_j(t_n)$ are type (III) sets with associated vertical segments $\beta_1(t_n),\beta_2(t_n)$ with endpoints on the same short boundary loop $\gamma(t_n)$  then either they are separated by a zero on $\gamma(t_n)$ or there is a zero close to the segment bounded by the endpoints. 

\end{enumerate}

  \end{lem}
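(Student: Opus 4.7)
The plan is to prove each of the three parts by contradiction, in each case constructing a pair of vertical segments issued from the relevant generic points $g_{t_n}(q_i)$ that $\eta$-interact with $\eta$ bounded away from zero on the Teichm\"uller-rescaled surface, contradicting Proposition~\ref{prop:rectangles} directly. For part (1), the starting observation is that if $p_k$ and $p_\ell$ are the endpoints of $\beta_k,\beta_\ell$ on $\alpha$ lying in the common short boundary segment $J_k\cap J_\ell$, then no zero on $\alpha$ separates them and neither $Y_k(t_n)$ nor $Y_\ell(t_n)$ has an interior zero with a vertical separatrix landing between $p_k$ and $p_\ell$. Hence the vertical line through $p_k$ admits a full extension $\beta_k^\ell$ across $\alpha$ into $Y_\ell(t_n)$ of length $\emul \lambda(Y_\ell(t_n))$, tracing the same trapezoids as $\beta_\ell$, and symmetrically the vertical line through $p_\ell$ extends into $Y_k(t_n)$ with length $\emul \lambda(Y_k(t_n))$. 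The concatenations $\hat\beta_k = \beta_k\cup\beta_k^\ell$ and $\hat\beta_\ell = \beta_\ell^k\cup\beta_\ell$ are parallel vertical segments bounding a single embedded rectangle of total height $\emul \lambda(Y_k(t_n))+\lambda(Y_\ell(t_n))$ in $g_{t_n}(X,\omega)$. Writing $y_k,y_\ell\geq 0$ for the distances from $g_{t_n}(q_k),g_{t_n}(q_\ell)$ to $\alpha$ along their respective lines, we select subsegments of common length $L$ starting at these generic points: when $y_k+y_\ell$ is comparable to $\min(\lambda(Y_k(t_n)),\lambda(Y_\ell(t_n)))$ orient both toward $\alpha$ and set $L=y_k+y_\ell$ for $\eta=1$; otherwise orient both in the same direction relative to $\alpha$ with $L\emul \min(\lambda(Y_k(t_n)),\lambda(Y_\ell(t_n)))$, yielding $\eta$ bounded below. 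In either regime $e^{t_n/2}L\to\infty$ by (\ref{eq:lengths}), so Proposition~\ref{prop:rectangles} is contradicted.

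For part (2), let $p$ be the endpoint of $\beta(t_n)$ on the shared curve $\alpha\subset\partial Y(t_n)\cap\partial X_i(t_n)$ and let $p_i$ be the generic point of the ergodic measure $\mu_i$ associated to $X_i(t_n)$ by Proposition~\ref{prop:ergodic}. We extend the vertical line through $g_{t_n}(q)$ across $p$ into $X_i(t_n)$, producing a vertical segment $\beta^{X_i}$ of length $\emul \lambda(X_i(t_n))\emul 1$. Passing to the Deligne-Mumford limit via Proposition~\ref{prop:limit}, $\omega_i(\infty)$ is a finite-area $1$-form on $X_i(\infty)$ whose vertical flow corresponds to $\mu_i$, so the orbit of $p_i(\infty)=\lim g_{t_n}(p_i)$ is dense. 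Mimicking the disc-covering argument in the proof of Proposition~\ref{prop:ergodic}(5), this density forces a sufficiently long vertical segment through $p_i(\infty)$ and the limit $\beta^{X_i}(\infty)$ to share an embedded rectangle of height proportional to their common length. Pulled back, this produces an $\eta$-interaction with $\eta$ bounded below between vertical subsegments of length $\emul 1$ based at $g_{t_n}(q)$ (extending across $p$ into $X_i(t_n)$) and $g_{t_n}(p_i)$ respectively, again contradicting Proposition~\ref{prop:rectangles}.

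For part (3), let $p_1,p_2\in\gamma$ be the endpoints of $\beta_1,\beta_2$ on the common short boundary loop. Assume for contradiction that no zero on $\gamma$ separates them and no interior zero of $Y_j(t_n)$ is close to $\gamma$ with its vertical separatrix landing on the arc from $p_1$ to $p_2$. Under these assumptions the two vertical lines through $p_1$ and $p_2$ into $Y_j(t_n)$ are parallel sides of an embedded rectangle in $Y_j(t_n)$ of height $\emul \lambda(Y_j(t_n))$ containing $\beta_1,\beta_2$ as initial subsegments. Applying the same case analysis as in part (1) selects subsegments of common length $L\emul\lambda(Y_j(t_n))$ based at $g_{t_n}(q_1),g_{t_n}(q_2)$ which $\eta$-interact with $\eta$ bounded below, and with $e^{t_n/2}L\to\infty$ by (\ref{eq:lengths}) Proposition~\ref{prop:rectangles} is contradicted.

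The hardest step is the case analysis in parts (1) and (3): one must verify that regardless of where the generic points sit on their associated vertical segments, suitable directions and a common length $L$ can be chosen to realize the required $\eta$-interaction. Part (2) is additionally delicate in that one must control the length of $\beta^{X_i}$ so that it survives as a nontrivial segment of $X_i(\infty)$, and exploit ergodicity of the vertical flow on the limit $X_i(\infty)$ to approximate $\beta^{X_i}$ by the orbit through $g_{t_n}(p_i)$.
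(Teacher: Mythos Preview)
Your approach is the same as the paper's: each part is argued by contradiction, producing an $\eta$-interaction with $\eta$ bounded below and then invoking Proposition~\ref{prop:rectangles}. The paper's own proof is extremely terse (one sentence per item), so your version is essentially a fleshed-out form of it; in particular your case analysis for (1) and (3), tracking where the generic points sit along $\beta_k,\beta_\ell$ and choosing directions and a common length $L$, is exactly the content the paper suppresses when it writes ``extend $\beta_\ell(t_n)$ into $Y_k(t_n)$; for a fixed $\eta>0$ it will $\eta$-interact with $\beta_k(t_n)$.''

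There is one genuine gap in your part (2). You assert that the vertical flow of $\omega_i(\infty)$ on $X_i(\infty)$ ``corresponds to $\mu_i$'' and therefore the vertical orbit of the single point $p_i(\infty)=\lim g_{t_n}(p_i)$ is dense. Neither statement is established anywhere in the paper, and there is no reason the vertical flow on the limit surface must be minimal. The mechanism the paper actually relies on (visible in the proof of Proposition~\ref{prop:ergodic}(4)) is different: \emph{limit points of images $g_{t_n}(p')$, as $p'$ ranges over all generic points of $\mu_i$}, are dense in $X_i(\infty)$. So instead of following one orbit, you should choose a generic point $p'$ for $\mu_i$ whose image limits inside a disc meeting $\beta^{X_i}(\infty)$; the rectangle is then obtained exactly as in the proof of Proposition~\ref{prop:ergodic}(5), and the rest of your argument goes through. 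You also do not treat the case where the adjacent $X_i(t_n)$ is a cylinder (so there is no Deligne--Mumford limit $X_i(\infty)$ to pass to); there the argument is in fact easier, since any two vertical segments in a flat cylinder of circumference $|\alpha(t_n)|\to 0$ automatically bound an embedded rectangle, and one concludes as in the cylinder case of Proposition~\ref{prop:ergodic}(5).
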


\begin{proof}

We argue by contradiction. For (1)  without loss of generality assume  
$\lambda(Y_k(t_n))\geq \lambda(Y_\ell(t_n))$.  Extend $\beta_\ell(t_n)$  into $Y_k(t_n)$. For a fixed $\eta>0$ it will $\eta$ interact with $\beta_k(t_n)$ for large $t_n$,  which is a contradiction.    
For (2),  if there were such a vertical segment,  extending the vertical segment into $\cup X_j(t_n)$ it would interact with an ergodic vertical segment also contradicting  (4) of Proposition~\ref{prop:ergodic}. 
For the proof of  (3), if there is no such zero  then the vertical  segments would $\eta$ interact for some fixed $\eta>0$ and large $t_n$. \end{proof}

\section{Proof  of Theorems}

 Now for the rest of the paper, by passing to subsequence of $t_n$ we can assume that the topology and combinatorics  of the ergodic components $X_j(t_n)$, complementary components  $Y(t_n)$,  thick thin components  $Y_j(t_n)$ of $Y(t_n)$ and $W_i(t_n)$  are all independent of $t_n$ in the sense that there are homeomorphisms of the underlying surface that preserve these subsets.   
  We will  write $Y_j(\infty)$ for limit of $Y_j(t_n)$.  
 
 \begin{defin}
For any  $Z=Z(t_n)$, each component of which is a union  of $Y_j(t_n)\subset Y(t_n)$ in the thick-thin decomposition glued together along  boundary loops, let $m(Z)$ be the maximum number of $W$ of type (I)-(IV) that can be  contained in $Z$.
\end{defin}

We define a quantity $\rho(Z)$ for any such $Z$. 

\begin{defin}
\label{def:rho}
For any  $Z$, 
 let   $g(Z)$ be the genus, $s(Z)$ the number of interior zeroes, and $n(Z)$ the number of boundary components. 
 Define  $$\rho(Z)=g(Z) +s(Z)+ n(Z)-1.$$ 
\end{defin}

\begin{rem}
We can make the same definition for any $\Omega\subset Z$ a domain with boundary which is a union of saddle connections.
\end{rem}
 
 The next two propositions together will immediately give the main theorem.
 
 \begin{prop}
\label{prop:sum} Let $Y(t_n)$ be a component of the complement of the union of the ergodic components $X_j(t_n)$. Then $k+\sum_{Y(t_n)} \rho(Y(t_n))\leq \rho(S)=g+s-1$, where  $k$ as before is the number of ergodic probability measures and $g$ is the genus of the entire surface $S$ and $s$ is the number of zeroes of $\omega$. 

\end{prop}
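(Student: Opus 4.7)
The plan is to split the inequality into two parts: (i) $k\leq\sum_i\rho(X_i(t_n))$, which uses that $\rho\geq 1$ on every ergodic piece, and (ii) $\sum_i\rho(X_i(t_n))+\sum_{Y(t_n)}\rho(Y(t_n))\leq\rho(S)$, a purely topological statement about the thick-thin decomposition of $S$. Combining (i) and (ii) immediately gives the proposition.

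For (i), Proposition~\ref{prop:ergodic} produces subsurfaces $X_1(t_n),\dots,X_\ell(t_n)$ with $\ell\geq k$, each associated to an ergodic measure. If $X_i(t_n)$ is a cylinder then $(g,s,n)=(0,0,2)$ and $\rho(X_i)=1$. If $X_i(t_n)$ is a thick non-cylinder then its boundary curves are essential in $S$, so $X_i$ is not a disc; hence $g(X_i)+n(X_i)\geq 2$ and $\rho(X_i)=g+s+n-1\geq 1$. It follows that $\sum_i\rho(X_i)\geq\ell\geq k$.

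For (ii), let $\mathcal{A}$ denote the graph in $S$ whose vertices are the $V$ zeros of $\omega$ lying on short curves and whose edges are the $E$ saddle connections forming them. Writing $Z_1,\dots,Z_M$ for the full collection of thick-thin pieces, additivity of compactly-supported Euler characteristic gives $\chi(S)=\chi(\mathcal{A})+\sum_j\chi(Z_j)$ with $\chi(\mathcal{A})=V-E$. Combined with $\sum_j s(Z_j)=s-V$ and the identity $2\rho(Z)=-\chi(Z)+2s(Z)+n(Z)$, this yields
\begin{equation*}
\sum_j \rho(Z_j) \;=\; \rho(S) + \tfrac{1}{2}\bigl(N_{\text{tot}}-V-E\bigr), \qquad N_{\text{tot}}:=\sum_j n(Z_j).
\end{equation*}
Partition the short curves into types $XX$, $XY$, $YY$ according to whether they bound two ergodic pieces, an ergodic and a non-ergodic piece, or lie inside some $Y(t_n)$. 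Using the elementary gluing rule that joining two pieces along a curve with $z$ zeros changes $\rho$ by $z-1$, one gets, in the clean case where no zero is shared between distinct short curves, $\rho(Y(t_n))=\sum_{P\subset Y(t_n)}\rho(P)+V_{YY}^{(Y)}-N_{YY}^{(Y)}$. Summing over all pieces,
\begin{equation*}
\sum_i\rho(X_i)+\sum_{Y(t_n)}\rho(Y(t_n)) \;=\; \rho(S) - \sum_{\alpha\in\mathcal{A}_{XX}\cup\mathcal{A}_{XY}}(v_\alpha-1)\;\leq\;\rho(S),
\end{equation*}
where the inequality uses $v_\alpha\geq 1$ because each short curve is a non-empty union of saddle connections.

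The main obstacle is the case, highlighted in the remark following the definition of the thick-thin decomposition, where distinct short curves share a zero: then $E>V$, the pieces have strictly fewer boundary components than the naive count $2N$ (with $N$ the total number of short curves), and the gluing corrections inside each $Y(t_n)$ pick up additional terms. These are absorbed by treating $\mathcal{A}$ as a genuine graph (rather than a disjoint union of cycles): the displayed identity for $\sum_j\rho(Z_j)$ remains valid verbatim because the derivation only uses $\chi(\mathcal{A})=V-E$, and a careful bookkeeping shows that each shared zero simultaneously decreases $N_{\text{tot}}$ and modifies the $Y$-gluing corrections in matching ways, so that the estimate in (ii) is preserved.
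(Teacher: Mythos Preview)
Your approach differs from the paper's. The paper argues by removing the ergodic pieces one at a time (reducing to the cylinder case, which maximises $\rho$ of the complement) and checks directly that each removal drops $\rho$ by at least $1$; after $\ell\geq k$ steps one is left with $\sum_Y\rho(Y)\leq\rho(S)-k$. Your two-step route via $k\leq\sum_i\rho(X_i)$ together with an Euler-characteristic additivity inequality is more systematic, but as written step (ii) has a real gap.

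The problem is that you pass through the \emph{full} thick--thin decomposition and then try to glue the $YY$ curves back inside each $Y$; when short curves share zeros you acknowledge the bookkeeping becomes delicate and then merely assert that the corrections ``match''. That final sentence is not a proof, and this is precisely the situation flagged by the Remark following the thick--thin definition. The clean fix is to skip the full decomposition entirely: let $\mathcal{A}'$ be the graph formed by the flat boundaries of the $X_i$ alone, so that cutting $S$ along $\mathcal{A}'$ yields exactly the $X_i$ and the $Y$. Your own identity then reads
\[
\sum_i\rho(X_i)+\sum_Y\rho(Y)=\rho(S)+\tfrac12\bigl(N_{\mathrm{tot}}-V-E\bigr)
\]
with no further gluing required, and $N_{\mathrm{tot}}\leq V+E$ follows from the ribbon-graph count $N_{\mathrm{tot}}=\sum_j(2-2h_j)-V+E\leq 2c-V+E$ (summing over the $c$ components of a regular neighbourhood of $\mathcal{A}'$), which is $\leq V+E$ because each component of $\mathcal{A}'$ contains at least one zero. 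No separate ``clean'' and ``non-clean'' analysis is needed.

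A smaller issue in (i): the implication ``boundary curves essential in $S$ $\Rightarrow$ $X_i$ not a disc'' is only immediate under the topological count of boundary components. Under the flat count that the paper's Remark adopts, several essential short curves can share a zero and the flat representative can have a single boundary component. A reading-independent argument is to invoke Proposition~\ref{prop:ergodic}(3): a non-cylinder $X_i$ limits to a punctured surface carrying a nonzero holomorphic $1$-form without poles, which forces genus at least $1$ and hence $\rho(X_i)\geq 1$.
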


\begin{prop}
\label{prop:basicY}
 For $Y(t_n)$  a component of the complement of the ergodic components, 
 $m(Y(t_n))\leq \rho(Y(t_n))$.
 
 \end{prop}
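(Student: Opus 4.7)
I would associate to each $W_i \subset Y(t_n)$ a canonical essential object in $Y$: for type (I) the core curve of the cylinder, for type (IV) the closed geodesic $\gamma(t_n)$, for type (II) any essential simple closed curve inside the minimal domain, and for type (III) the vertical arc $\beta_i$ joining two short boundary loops. Proposition~\ref{prop:rectangles} and Lemma~\ref{lem:interact} together force these objects to be pairwise disjoint and pairwise non-isotopic: two parallel horizontal curves in the same flat metric would bound a cylinder whose vertical transversals through the two generic points would necessarily $\eta$-interact, violating Proposition~\ref{prop:rectangles}.

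\textbf{Inductive reduction via type (III).} I proceed by induction on $\rho(Y(t_n))$. If some $W_i$ is of type (III), I cut $Y$ along its arc $\beta_i$. By Lemma~\ref{lem:interact}(2), both endpoints of $\beta_i$ lie on short boundary loops interior to $Y$, and Lemma~\ref{lem:interact}(1) and (3) preclude any other $W_k$ from interfering with the cut. Cutting merges the two short loops into a single one, reducing $n(Y)$, and hence $\rho(Y)$, by $1$; the remaining $W_k$'s survive unmolested. The induction hypothesis gives $m - 1 \leq \rho(Y) - 1$.

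\textbf{Euler-characteristic count for the remaining types.} If no $W_i$ is of type (III), each $W_i$ supplies an essential simple closed curve $\gamma_i$. Cut $Y$ along $\gamma_1, \ldots, \gamma_m$ and let $R_1, \ldots, R_p$ be the resulting pieces. By additivity of Euler characteristic and tracking of boundary components,
\[
\chi(Y) = \sum_j \bigl(2 - 2g(R_j) - n(R_j)\bigr), \qquad \sum_j n(R_j) = n(Y) + 2m,
\]
which rearranges to $m = g(Y) + p - 1 - \sum_j g(R_j) \leq g(Y) + p - 1$. I would then argue that $p \leq s(Y) + n(Y)$ by showing that each complementary piece $R_j$ must contain at least one zero of $\omega$ or one boundary component of $Y$: otherwise, $R_j$ would be an annular region whose horizontal core constitutes an unaccounted type (I) cylinder, contradicting maximality of the collection $W_1,\ldots,W_m$. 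Combining the two estimates yields $m \leq g(Y) + s(Y) + n(Y) - 1 = \rho(Y)$.

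\textbf{Main obstacle.} The delicate step is the ``no trivial piece'' claim, namely that each $R_j$ must contain either a zero of $\omega$ or a boundary component of $Y$. The justification leans on the horizontal structure of the $\omega_j(\infty)$'s: any putative annular complementary region would contain a horizontal core circle which either equals one of the $\gamma_i$'s (contradicting that $R_j$ is complementary) or would itself qualify as an additional $W_{m+1}$ of type (I). Formalizing this across the thick-thin subdivision $Y(t_n) = \bigcup_j Y_j(t_n)$, and handling the way SCCs from different $Y_j$'s meet along short curves, requires care; Lemma~\ref{lem:interact} is the key tool to prevent boundary pieces from being double-counted at those short interfaces.
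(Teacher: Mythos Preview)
Your inductive step for type (III) is based on a confusion of surfaces. The arc $\beta_i$ of a type (III) set joins two \emph{short boundary loops of some thick--thin piece $Y_j(t_n)$}, and by Lemma~\ref{lem:interact}(2) these loops lie in the interior of $Y(t_n)$, not on $\partial Y(t_n)$. So $\beta_i$ is an arc with both endpoints in the interior of $Y$; cutting $Y$ along it is not a well-defined surface operation and certainly does not reduce $n(Y)$. What does drop is $n(Y_j)$, but your induction is on $\rho(Y)$, not $\rho(Y_j)$, so the step does not close. In fact a type (III) arc can join a short loop of $Y_j$ to itself, in which case even $n(Y_j)$ goes up, not down; this is exactly why the paper introduces the correction term $\tau(Y_j)$.

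Your Euler-characteristic count for the remaining types also has a genuine gap, and it is not the one you flag. The estimate $m \leq g(Y)+p-1$ is fine, but the bound $p \leq s(Y)+n(Y)$ can simply fail. Take $Y$ of genus $2$ with one boundary component and no interior zeros; two disjoint separating curves (each bounding a genus-one subsurface) give $p=3$ pieces, while $s(Y)+n(Y)=1$. The actual inequality $m\le\rho(Y)$ still holds in such examples because the dropped term $-\sum g(R_j)$ compensates, so throwing it away and then bounding $p$ is too crude. Your proposed rescue (an annular $R_j$ would be an extra type (I) cylinder, contradicting maximality) does not work either: the $W_i$'s are tied to \emph{generic points of distinct measures}, so there is no ``maximal collection'' hypothesis to contradict, and there is no reason an annular complementary region should carry a horizontal cylinder structure.

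The paper handles both issues by working first on each thick--thin piece $Y_j$ separately, proving $m(Y_j)\le\rho(Y_j)+\tau(Y_j)-s''(Y_j)$ with explicit correction terms for the excess type (III) arcs and the close interior zeros, and only then gluing the $Y_j$'s. The point is that gluing two pieces along a short loop converts boundary zeros into interior zeros of $Y$, and a careful count (using a maximal acyclic subgraph of the graph whose edges are the type (III) arcs) shows these new interior zeros absorb the $\tau-s''$ corrections. Your attempt to bypass the thick--thin bookkeeping and work directly on $Y$ loses precisely the information needed to control the type (III) contribution.
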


\begin{proof} [Proof of Theorem~\ref{thm:main}]

Combine Proposition~\ref{prop:sum} with Proposition~\ref{prop:basicY}.

\end{proof}

\begin{proof}[Proof of Proposition~\ref{prop:sum}]
We successively remove the domains $X_j(t_n)$. At each stage we maximize the value $\rho$ of the complement if the domain we remove $X_j(t_n)$ is a cylinder.  Inductively,  given a collection of cylinders $X_j(t_n)$ with complement  $Y'$ we remove a cylinder  giving  a new complement $Y''$.  We continue $k$ times until we are left with $Y$. 
It is enough to show 
 $\rho(Y'')<\rho(Y')$ each time we remove a cylinder. 

We have $n(Y'')\geq n(Y')$ but $n(Y'')-n(Y')\leq s(Y')-s(Y'')$ since each new boundary component must have at least one zero on it. If we also
decrease the genus, then we see that  $\rho(Y'')-\rho(Y')\leq -1$.

Suppose then  the genus of $Y''$ equals the genus of $Y'$.    Thus $Y''$ has  one more  connected component than $Y'$.   If the complementary connected components are not discs, then  the extra $-1$ in the definition of $\rho$ again gives $\rho(Y'')\leq \rho(Y')-1$.  If a complementary  component $\hat Y$ is a disc then $\rho(\hat Y)=0$  Then ignoring the discs $n(Y'')=n(Y')$, but again since  $Y''$ has at least one fewer interior zero so again $\rho(Y'')\leq \rho(Y')-1$.  Iterating this reduction $k$ times until all cylinders  $X_j(t_n)$ have been removed we have proved the desired formula $k+\sum_Y \rho(Y)\leq \rho(S)=g+s-1$. 
\end{proof}

 We will prove 
 Proposition~\ref{prop:basicY}
in two steps. In the first step, Proposition~\ref{prop:basic},
we bound $m(Y_j(t_n))$.
%
The main complication are the type (III) sets $W$ and accounting for zeroes on $\partial Z(t_n)$.
In the second step  we  will glue the   $Y_j(t_n)$ together  to form $Y$. 

We now fix a domain $Y_j(t_n)$. 
Suppose we have a collection $\mathcal{T}$ of type (III) sets $W\subset Y_j(t_n)$. Each determines a pair of segments along loops of $\partial Y_j(t_n)$ and a vertical line $\beta$ with endpoints on those segments. 

\begin{defin}
Let $\mathcal{G}$ be a graph with vertices that are the boundary components $C(t_n)$  of $Y_j(t_n)$ that contain a vertical segment $\beta$ associated to a type (III)  
$W$. The  edges $\mathcal{E}$ are the  $\beta$.  
\end{defin}

\begin{defin}
Let  $\mathcal{G}_1$ be a possibly disconnected  subgraph of $\mathcal{G}$ with  edges $\mathcal{E}_1$ such that each connected component of  $\mathcal{G}_1$   does not contain any  loops and  $\mathcal{G}_1$  is maximal in that $\text{card}(\mathcal{E}_1)$ is as large as possible. 
\end{defin}

 \begin{defin}
For each  component $B(t_n)$ of $\partial Y_j(t_n)$, let  $\tau(B(t_n))$ be the number of edges $\beta\in\mathcal{G}\setminus \mathcal{G}_1$ that   have an endpoint on $B(t_n)$ and  let 
 $$\tau(Y_j(t_n))=\sum \tau(B(t_n)),$$ the sum over the boundary components $B(t_n)$ of $Y_j(t_n)$.

\end{defin}

\begin{defin}
Let $s''(Y_j(t_n))$ be the number of close zeroes to short loops on $\partial Y_j(t_n)$ that contain endpoints of tye (III) $\beta\subset W$.

\end{defin}

 By definition,  $$\tau(Y_j(t_n))=\text{card}(\mathcal{T})-\text{card}(\mathcal{E}_1).$$ 

\begin{rem}

\label{rem:only}
Since $\mathcal{G}_1$ has maximal cardinality, then for any  vertex $v=B(t_n)$ either $v$ is a vertex of some edge in $\mathcal{E}_1$   or the only edges with $v$ as one endpoint join $v$ to itself.

\end{rem}

\begin{prop}
\label{prop:basic}  For large $t_n$
 $$m(Y_j(t_n))\leq \rho(Y_j(t_n))+\tau(Y_j(t_n))-s''(Y_j(t_n))$$ 
\color{black}
\end{prop}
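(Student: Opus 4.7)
The plan is to rewrite the inequality, using $\tau(Y_j(t_n)) = m_{III}(Y_j(t_n)) - |\mathcal{E}_1|$ where $m_{III}$ counts the type (III) $W$'s in $Y_j(t_n)$, in the equivalent form
\[
m_I(Y_j(t_n)) + m_{II}(Y_j(t_n)) + m_{IV}(Y_j(t_n)) + |\mathcal{E}_1| + s''(Y_j(t_n)) \;\le\; g(Y_j)+s(Y_j)+n(Y_j)-1.
\]
The strategy is then to associate to each summand on the left an independent piece of topological data on $Y_j(t_n)$ so that the total fits inside the budget $\rho(Y_j(t_n))$.

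First I would extract from each non-type-(III) $W$ an essential simple closed curve in its interior: the core curve for type (I), an essential closed curve inside the minimal domain for type (II), and the geodesic $\gamma(t_n)$ built into the definition for type (IV). Since the $W$'s are pairwise disjoint and the associated vertical segment satisfies $|\beta(t_n)|\emul \lambda(Y_j(t_n))$, none of these curves is peripheral in $Y_j(t_n)$ (for type (IV) this is built into the definition). This yields a disjoint non-peripheral multicurve, and each type (II) domain in addition requires either genus or an interior zero by Strebel's decomposition; together these considerations bound $m_I + m_{II} + m_{IV}$ in terms of $g(Y_j)$ plus a portion of $s(Y_j)$.

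Next, I would incorporate the tree-edge count. Each edge of the spanning forest $\mathcal{G}_1$ is a vertical arc joining two distinct boundary loops of $Y_j(t_n)$, and by Lemma~\ref{lem:interact}(1)--(2) these arcs can be taken mutually disjoint and disjoint from the curves of the previous step. A forest on at most $n(Y_j)$ vertices has at most $n(Y_j)-1$ edges, which fills exactly the remaining $n(Y_j)-1$ contribution to $\rho(Y_j(t_n))$.

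The main obstacle is the close-zero correction $-s''(Y_j(t_n))$. By Lemma~\ref{lem:interact}(3), any two type (III) $\beta$'s landing on the same short boundary loop must be separated either by a boundary zero or by a close zero. I would handle this by attaching to each close zero a short vertical arc from the close zero to the adjacent short boundary, producing one more independent element of the topological system per close zero; these arcs enlarge the cut system above by $s''(Y_j(t_n))$ units, which must therefore be subtracted from the count of admissible $W$'s. Making this charge injective across all the types of $W$ and the tree edges, especially in configurations where several $W$'s meet at the same short boundary loop and share zeros with it, is where the accounting will be most delicate, and I expect this to be handled by a careful Euler-characteristic/first-homology bookkeeping on the cut surface.
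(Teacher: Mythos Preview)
Your algebraic rewrite using $\tau(Y_j(t_n)) = \text{card}(\mathcal{T}) - |\mathcal{E}_1|$ is correct and clarifying, and the forest bound $|\mathcal{E}_1| \leq n(Y_j)-1$ is essentially the paper's observation that cutting along a forest edge merges two distinct boundary components into one, so $\rho$ strictly drops.

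The gap is exactly where you yourself flag it: the bound on $m_I + m_{II} + m_{IV}$ is not established, and the additive split of the budget---types I/II/IV consume $g$ and part of $s$, forest edges consume $n-1$, close zeroes consume $s''$---is never justified. Extracting a disjoint non-peripheral multicurve from the non-type-(III) $W$'s only gives a pants-type bound of order $3g+n+s$, not $g+s$; to do better you must use that the $W$'s are subsurfaces whose boundaries carry zeroes, not merely curves, and that is precisely the ``careful Euler-characteristic bookkeeping'' you defer. A smaller point: for type~(IV) the definition only guarantees that the trapezoid \emph{arcs} are non-peripheral; $\gamma(t_n)$ itself may be isotopic to a boundary component, and the paper handles that case separately, so your multicurve need not be non-peripheral.

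The paper's route is not a static count but a sequential cutting argument in two stages. First it cuts $Y_j(t_n)$ along all type~(III) arcs $\beta(t_n)$, recording that $\rho$ strictly drops along each forest edge and does not rise along the others; passing then to the limit surface $Y_j^F(\infty)$, the close zeroes coalesce with the nodes, and this is exactly where the $-s''$ correction appears: $\rho(Y_j^F(\infty)) = \rho(\Omega_j^F(t_n)) - s''$. Second, on $Y_j^F(\infty)$ it removes the remaining $W$'s of types I, II, IV one at a time, showing each removal drops $\rho$ by at least one (for type~IV this uses the trapezoid arc together with $\gamma(t_n)$, not $\gamma(t_n)$ alone). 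The two-stage device sidesteps the injectivity bookkeeping you anticipate: all interactions are absorbed into the intermediate quantity $\rho(Y_j^F(\infty))$, and the inequality falls out by adding the two stages.
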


  \begin{proof}
  We consider first  the case that $Y_j(t_n)$ is a plump sequence.  In this case $s''(Y(t_n))=\tau(Y_j(t_n))=0$
 since there are no type (III) sets $W$. Inductively   starting with $Y_j(t_n)$ we remove cylinders and minimal domains, denoted by $U$,  leaving a surface $\Omega$ with corresponding $\rho(\Omega)=g(\Omega)+s(\Omega)+n(\Omega)-1$. The proposition follows from the claim that when we remove another $U$ that $$\rho(\Omega\setminus U)\leq\rho(\Omega)-1.$$ We prove the claim.  If $U$ is a cylinder then the number of additional boundary components of $\Omega\setminus U$ is offset by the fewer number of interior zeroes. If the genus  of $\Omega\setminus U$ is smaller than the genus  of $\Omega$ we have the claim. If the genus is the same, then the cylinder is separating and $\Omega\setminus U$ has one more connected component than $\Omega$ and so there is an additional $-1$ in the definition of $\rho$ and again we have the claim. 
 The situation of adding a minimal domain is similar.

Next consider the case of a gaunt sequence.  
 Consider the  vertical segments $\beta(t_n)$ of all type (III) sets $W$. They limit on vertical  saddle connections $\beta(\infty)$ joining nodes in $Y_j(\infty)$.  We denote the complement of these vertical saddle  connections in $Y_j(\infty)$ by $Y_j^F(\infty)$. We may define $\rho(Y_j(\infty))$ in the same way as before, where in the definition, the nodes or punctures are counted as interior zeroes in $s(Y_j(\infty))$.

 \begin{lem}

 \label{lem:first}
For large $t_n$, 
$\text{card} (\mathcal{T})\leq 
 \rho(Y_j(t_n))-\rho(Y_j^F(\infty))+\tau(Y_j(t_n))-s''(Y_j(t_n))$.

 \end{lem}
\color{black}

\begin{proof}
We start with $Y_j(t_n)$ and first remove the collection of edges $\beta(t_n)\in \mathcal{E}_1$.  Then  remove the rest of $\beta(t_n)\in\mathcal{E}$.   At each stage we have a set $\Omega(t_n)$ whose boundary consists of segments of $\partial Y_j(t_n)$ and  vertical $\beta(t_n)$ of $W$ that have been removed. We then have a new $W$ and vertical $\beta'(t_n)$ and set 
  $\Omega'(t_n)=\Omega(t_n)\setminus \beta'(t_n)$. For edges $\beta(t_n)\in \mathcal{E}_1$, at each stage  $\beta(t_n)$ will join distinct boundary components which says that $\Omega'(t_n)$ has fewer boundary components than $\Omega(t_n)$ which implies    \begin{equation}
  \label{eq:strict}
  \rho(\Omega'(t_n))<\rho(\Omega(t_n)).
  \end{equation} For  $\beta(t_n)\notin \mathcal{E}_1$ we may increase the number of boundary components, but if so,   either we decrease the genus or increase the number of connected components and therefore the number of $-1$ in the definition of $\rho(\Omega')$ compared to $\rho(\Omega)$.  In either case we still have $$\rho(\Omega'(t_n))\leq\rho(\Omega(t_n)),$$ but not necessarily strict inequality. 
  
 Removing all $\beta(t_n)$  we end with some final $\Omega_j^F(t_n)$.  As remarked earlier, the $\beta(t_n)\subset Y_j(t_n)$ limit onto $\beta(\infty)\subset Y_j(\infty)$ joining the nodes.   The segments of $\partial Y_j(t_n)$ on the boundary of $\Omega_j(t_n)$  as well as the   close zeroes  limit to the nodes  We conclude that  
$$\rho(Y_j^F(\infty))=\rho(\Omega_j^F(t_n))-s''(t_n).$$   Now 
$\text{card}(\mathcal{T})=\text{card}(\mathcal{E}_1)+\tau(Y_j(t_n))$.
By (\ref{eq:strict}) $\text{card}(\mathcal{E}_1)\leq \rho(Y_j(t_n))-\rho(\Omega_j^F(t_n))$.
 The lemma follows.

\end{proof}

 \begin{lem}
\label{lem:second}
$m(Y_j^F(\infty))\leq  \rho(Y_j^F(\infty))$.
\end{lem} 
\begin{proof}

The proof begins the same way as the proof in the plump case. 
We remove all $W$ that are cylinders and minimal domains  decreasing $\rho$ by at least that number of domains until we are left with a possibly disconnected  $\Omega(t_n)$ and need to consider type (IV) sets $W\subset \Omega(t_n)$.  Each $W$ is a union of  trapezoids $\text{Trap}(t_n)$ and contains a closed loop $\gamma(t_n)$.  
The first possibility is that cutting along $\gamma(t_n)$ reduces the genus of the surface.  In addition it reduces the number of interior  zeroes.  One adds a pair of boundary components. At this stage the value of $\rho$ has not been reduced. However we also remove the arcs in the trapezoid crossing $\gamma(t_n)$. Since the arcs join different boundary components, cutting along the arc reduces their number and hence reduces $\rho$. 

The other possibility that is cutting along $\gamma(t_n)$ does not reduce the genus but divides the surface into two components. There must be a zero on $\gamma(t_n)$ and a pair of $-1$ in the definition of $\rho$ as compared to a single $-1$ to begin. There are $2$ additional boundary components. Thus the corresponding $\rho$ are the same. But now an arc of the trapezoid joins  boundary components of the surface to the two new two components found by  cutting along $\gamma(t_n)$.
Cutting along this arc reduces the total number of boundary components, reducing the value of $\rho$.  Finally the same analysis holds if $\gamma(t_n)$ is isotopic to a boundary component since the trapezoid arc cannot be isotopic to the boundary.  
 We conclude that each time we remove a set $W$ the value of $\rho$ strictly decreases which is the desired result. 

\end{proof}
We finish the proof of Proposition~\ref{prop:basic}.
We first note that by construction $$m(Y_j(t_n))=\text{card}(\mathcal{T})+ m(Y_j^F(\infty)).$$
Combining this with the inequalities from Lemma~\ref{lem:first} and Lemma~\ref{lem:second}  gives 
\begin{equation}
\label{eq:total}
m(Y_j(t_n))\leq \rho(Y_j(t_n))+\tau(Y_j(t_n))-s''(t_n).
\end{equation}

\end{proof}
 
\begin{proof}[Proof of Proposition~\ref{prop:basicY}]

We do not need to record dependence on time $t_n$ in this proof.
Consider boundary loops of  any  $Y_j$ which is either plump or  gaunt but the loops  are not subsets of boundary components which are vertices  of edges  in $\mathcal{G}\setminus \mathcal{G}_1$.  We start by considering the case of gluing $Y_j$  to itself along two such loops to form $Y_j'$.  We have $$s(Y_j')\geq s(Y_j)+1, g(Y_j')=g(Y_j)+1, n(Y_j')=n(Y_j)-2.$$   This gives  $\rho(Y_j')\geq \rho(Y_j)$
so   $m(Y_j')=m(Y_j)$ and $\rho(Y_j)\leq \rho(Y_j')$. If  there were terms $\tau(Y_j)$ or $s''(Y_j)$ in the bound for $m(Y_j)$, given in Proposition~\ref{prop:basic} they are not effected by the gluing  and so appear in the expression for $m(Y_j')$, and we conclude using  Proposition~\ref{prop:basic}  that \begin{equation}
\label{eq:gluing}m(Y_j')\leq \rho(Y_j')+\tau(Y_j)+s''(Y_j).
\end{equation}

 Next we consider that $Y_j$  is glued to itself,  but a cylinder is inserted. We have again $g(Y_j')=g(Y_j)+1$ but the possibility that a cylinder is associated to a generic measure gives  $m(Y_j')=m(Y_j)+1$. If the boundary circles of the cylinder are not joined at a point, then $s(Y_j')\geq s(Y_j)+2$ and $n(Y_j')=n(Y_j)-2$.   Putting these facts together we  have $\rho(Y_j)+1\leq \rho(Y_j')$. 
 If the two circles  are joined at a point, then $s(Y_j')=s(Y_j)+1$ and $n(Y_j')=n(Y_j)-1$, so we again have
$\rho(Y_j)+1\leq \rho(Y_j')$. We conclude again using  Proposition~\ref{prop:basic} that
(\ref{eq:gluing}) holds.

 Next  consider distinct surfaces $Y_j$ and $Y_k$ glued together to form a new $Z$ so $m(Z)=m(Y_j)+m(Y_k)$.  Again assume that the assumptions described in the first paragraph hold. Now  $\rho(Y_j)+\rho(Y_k)$ has a $-2$ in its definition, while $\rho(Z)$ contains a single $-1$ term.  We also have $$g(Z)=g(Y_k)+g(Y_k), n(Z)=n(Y_j)+n(Y_k)-2, s(Z)\geq s(Y_k)+s(Y_k)+1,$$ 
giving $$\rho(Z)\geq \rho(Y_j)+\rho(Y_k).$$   If a cylinder is inserted  between $Y_j$ and $Y_k$ then $m(Z)=m(Y_j)+m(Y_k)+1$, but now $s(Z)\geq s(Y_j)+s(Y_k)+2$.
In either case (\ref{eq:gluing}) holds for $Z$. 
 
 We continue this process of gluing. 
 If there are no  gaunt $Y_j$, or there are gaunt $Y_j$, but $\tau(Y_j)=0$ for all of them, then at the end of the gluing we conclude $m(Y)\leq \rho(Y)$ and we are done.
 Notice if a loop of $Y_j$ is glued to a loop on the boundary of the ergodic domains, it cannot contain  the endpoint of $\beta$ of type (III) by(2) of  Lemma~\ref{lem:interact}.

 Thus assume that $\tau(Y_j)>0$ for some $Y_j$.
We assume we have performed all possible gluings described in the previous paragraphs leaving  surfaces, denoted $Z_k$ composed of a number of $Y_j$. They have   $n_k'$ boundary components which contribute to $\tau(Z_k)$ and $n_k''$  boundary components in common with the boundary of the ergodic components.   Consider then a boundary component $B$ of some $Y_j\subset Z_k$ and such that  $\tau(B)>0$. Suppose    $\gamma_1,\ldots, \gamma_p\subset B$ are the short boundary loops that each contain endpoints of a vertical segment $\beta$ associated to some $W\in \mathcal{T}$.  On each $\gamma_i$ there is a collection of disjoint segments, each containing an endpoint of exactly one of these $\beta$.  The  endpoints of the segments themselves are either zeroes or points on vertical lines that hit a close zero.  By Lemma~\ref{lem:common}  the $\gamma_i$ do not share any zeroes, and as a consequence do not share close boundary zeroes with each other.      Suppose $Y_j$ is glued along  each  $\gamma_i$ to some $Y_\ell\subset  Z_m$ for some $Z_m$.   By Lemma~\ref{lem:interact}  the segments  along $\gamma_i$ that correspond to type (III) $W\subset Y_j$  are disjoint from the possible segments  from the type (III) $W\subset Y_\ell$.  Let  $a_j(\gamma_i)$ be the number of the segments on $\gamma_i$ corresponding to $W\subset Y_j$ and $a_j(B)=\sum_{\gamma_i\subset B}a_j(\gamma_i)$. 

  Since $\tau(B)>0$, 
  by Remark~\ref{rem:only} one of two possibilities holds.
  The first is that  $B$ is a vertex of $\mathcal{G}_1$, in which case  there is a $W$ of type (III) with vertical $\beta$  having endpoint on  $\gamma_i$ but the edge $\beta\in \mathcal{G}_1$ and so this $\beta$  does not contribute to the set of $W$ that contribute to counting $\tau(Y_j)$.  This then says that $a_j(\gamma_i)$ is larger than the number of $\beta$ that do contribute to $\tau$ with endpoints on $\gamma_i$. 

 The second possibility is that $B$ is not a vertex of $\mathcal{G}_1$. In this case all vertical   $\beta$ of type (III) with endpoints on $B$ must join $B$ to itself. Thus there are two segments on $B$ associated to a single $\beta$ and again the number of segments is larger than the number of $\beta$ contributing to $\tau(B)$. 
We conclude that
   \begin{equation}
  \label{eq:strictcount}
a_j(B)>\tau(B).
\end{equation}

Summing the above inequality over the boundary components $B$ of $\partial Y_j$ we see 
 $$a(Y_j):=\sum_{B\subset \partial Y_j} a_j(B) \geq \tau(Y_j)+n'(Y_j).$$
  
  After all the gluings to form $Y$   we compute the number of interior zeroes $s(Y)$  of $Y$.
  For each $Y_j$ the number of segments considered above is the same as the number of zeroes on the boundary plus the number of close zeroes.   The former become new interior zeroes of $Y$ while the latter were already interior zeroes of $Y_j$.  We conclude that  $$s(Y)=
  \sum_j s(Y_j)+\sum_j \left(a(Y_j)-s''(Y_j)\right)\geq \sum_j \left(s(Y_j)+\tau(Y_j)+n'(Y_j)-s''(Y_j)\right).$$ 
This and the definition of $\rho$ implies $$\sum_j (\rho(Y_j)+\tau(Y_j)-s''(Y_j))\leq \sum_j (g(Y_j)+s(Y_j)+n'(Y_j)+n''(Y_j)-1+\tau(Y_j)-s''(Y_j))\leq g(Y)+s(Y)+n''(Y)-1=\rho(Y).$$
This together with Proposition~\ref{prop:basic} gives 
$$m(Y)=\sum_j m(Y_j)\leq \sum_j (\rho(Y_j)+\tau(Y_j)-s''(Y_j))\leq \rho(Y)),$$
and we are done. 
  
    \end{proof}

\begin{proof}[Proof of Theorem~\ref{thm:special}]
 
  The collection of $X_j(t_n)$ associated to ergodic measures, together with $Y(t_n)$ consists of  at least $g+1$ surfaces.   We claim that the $X_j(t_n)$ must all be cylinders.    The boundary of $X_j(t_n)$ cannot be a single loop joining a zero to itself since it would be trivial in homology, and that is impossible for a translation surface.  If $X_j(t_n)$  is not a cylinder, the complement $S\setminus X_j(t_n)$ either has  $s'=0$ interior zeroes,  genus $g'\leq g-1$ and $n'=1$ boundary components, or  genus $g'\leq g-2$, no interior zeroes, and $n'=2$ boundary components. In either  of the two cases, we have $$\rho(S\setminus X_j(t_n))\leq g'+n'+s'-1\leq g-1.$$ But  this contradicts that $S\setminus X_j(t_n)$  contains at least $g$  subsurfaces, to account for the $g+1$ generic measures. 
This proves the claim.   The areas of the cylinders $X_j(t_n)$ are bounded below by $A_0>0$.   
 
  Now suppose the theorem does not hold so there is a $1$-sided generic non-ergodic measure. We have associated to the $g$ ergodic measures $g$ disjoint cylinders   $X_j(t_n)$ whose boundaries have lengths going to $0$ as $t_n\to\infty$.   Since twisting in these cylinders are independent in cohomology, the saddle connections crossing them are $g$ independent elements  in the relative homology $H_1(S,\Sigma,\mathbb{Z})$ where $\Sigma$ is the set of either $1$ or $2$ zeroes. This space has dimension either $g$ or $g+1$.  Consequently the complement of the $g$ cylinders,  $Y(t_n)=g_{t_n}(X,\omega)\setminus \cup_{j=1}^gX_j(t_n)$   is either a disc in the case of a single zero or an annulus  in the case of $2$ zeroes.    It cannot be a disc, so we have a contradiction in the case of a single zero.  
 
 Thus assume there are a pair of zeroes. 
  For large $t_n$ the  image of the generic point $q$ for a  non-ergodic measure $\nu$  must lie in the annulus $Y(t_n)$.  
We then have $g+1$ cylinders with circumferences going to $0$ along a sequence $t_n\to\infty$.  We will arrive at a contradiction, For fixed cylinder $X_j(t_n)$ with boundary $\alpha_j(t_n)$ and fixed time $t_n$, we have $\lim_{t\to\infty}|g_{t}(\alpha_j(t_n))|_{\omega(t)}\to\infty$.   That is, for any fixed $\alpha_j(t_n)$, its length goes to infinity as $t\to\infty$.  Here we are measuring lengths with respect to  $\omega(t_n)$. Consequently, for each fixed $t_n$ there is  some $j$ and smallest $s_n> t_n$ such that $|g_{s_n}(\alpha_j(t_n))|_{\omega(s_n)}=\sqrt A_0$.
Since the area of $X_j(t_n)$ is at least $A_0$, and the circumference is $\sqrt A_0$, the distance across  $X_j(t_n)$   is therefore at least $\sqrt A_0$.  Thus all short circumference cylinders must be contained in the complement of $ X_j(t_n)$ at time $s_n$.  The complement of $X_j(t_n)$ still must contain $g+1$ cylinders to account for the cylinders associated to the ergodic measures and the non-ergodic generic measures.This is impossible since  the corresponding quantity $g'+n'+s'-1$ is at most $g$. 

\end{proof}

 \end{document}